\newtheorem{theorem}{Theorem}[section]
\newtheorem{lemma}[theorem]{Lemma}
\newtheorem{proposition}[theorem]{Proposition}
\theoremstyle{definition}
\newtheorem{definition}[theorem]{Definition}
\theoremstyle{remark}
\newtheorem{remark}[theorem]{Remark}
\newtheorem{example}[theorem]{Example}
\numberwithin{equation}{section}
\newcommand{\ev}{\operatorname{ev}}
\newcommand{\R}{\mathds{R}}
\newcommand{\N}{\mathds{N}}
\newcommand{\p}{\mathcal{P}}
\newcommand{\op}{\Omega_{\p}}
\newcommand{\met}{\ensuremath\operatorname{Met}}
\newcommand{\lin}{\ensuremath\operatorname{Lin}}
\newcommand{\gr}{\ensuremath\operatorname{Gr}}
\newcommand{\chr}{\ensuremath\Gamma}
\newcommand{\sect}{{\boldsymbol{\Gamma}}}
\newcommand{\D}{\boldsymbol{\operatorname{D}}}
\newcommand{\s}{\ensuremath{\mathcal{S}}}
\title[Genericity under general boundary conditions]{Genericity of nondegenerate geodesics with general boundary conditions}
\author[R. G. Bettiol]{Renato G. Bettiol}
\address{Departamento de Matem\'atica \hfill\break\indent Universidade de S\~ao Paulo \hfill\break\indent
Rua do Mat\~ao, 1010 \hfill\break\indent 05508-090 S\~ao Paulo, SP, Brazil}
\email{rbettiol@ime.usp.br; renatobettiol@gmail.com}
\author[R. Giamb\`o]{Roberto Giamb\`o}
\address{Dipartimento di Matematica e Informatica \hfill\break\indent
Universit\`a di Camerino\hfill\break\indent 62032 Camerino, Italy}
\email{roberto.giambo@unicam.it}
\date{October 21st, 2009. \emph{Subject Classification} (MSC2010): \textrm{57R45, 57R70, 57N75, 58E10}}
\thanks{The first author was supported by Fapesp, Brazil, Grant 2008/07604-0.}
\begin{document}

\begin{abstract}
Let $M$ be a possibly noncompact manifold. We prove, generically in the $C^k$--topology ($2\leq k\leq +\infty$), that semi--Riemannian metrics of a given index on $M$ do not possess any degenerate geodesics satisfying suitable boun\-da\-ry conditions. This extends a result of Biliotti, Javaloyes and Piccione \cite{biljavapic} for geodesics with fixed endpoints to the case where endpoints lie on a compact submanifold $\p\subset M\times M$ that satisfies an admissibility condition. Such condition holds, for example, when $\p$ is transversal to the diagonal $\Delta\subset M\times M$. Further aspects of these boundary conditions are discussed and general conditions under which metrics without degenerate geodesics are $C^k$--generic are given.
\end{abstract}

\maketitle
\tableofcontents

\section{Introduction}

Genericity of properties of flows is a widely explored topic in dynamical systems, particularly regarding geodesic flows. A well known example is the so--called \emph{bumpy} metric theorem (first stated by Abraham \cite{abraham},
with complete proof by Anosov \cite{anosov}). This asserts that Riemannian metrics on a compact manifold $M$ without degenerate periodic geodesics are generic relatively to the $C^k$--topology ($2\le k\le+\infty$).

Counterexamples by Meyer and Palmore \cite{mp} point out that abstract Hamiltonian systems cannot be considered for generalizations of the bumpy theorem to a more comprehensive class of dynamical flows. Basically, the dynamics of solutions differ in distinct energy levels, and hence the nondegeneracy property fails to be generic. On the other hand, it is interesting to extend results on geodesic flows to a more general \emph{semi--Riemannian} setting. Motivation for studying generic properties of semi--Riemannian geodesic flows also comes from Morse theory. Indeed, a crucial assumption for developing a Morse theory for geodesics between fixed points is that the two arbitrarily fixed distinct points must be nonconjugate. Recent works by Abbondandolo and Majer \cite{AbbMej2,AbbMej} connect Morse relations for critical points of the semi--Riemannian energy functional to the homology of a doubly infinite chain complex, the Morse--Witten complex. They also prove  stability of this homology with respect to small perturbations of the metric structure. Thus, it is important to ask whether it is possible to perturb a metric in such a way that the nonconjugacy property between two points is preserved. A positive answer to this question is given by a recent work of Biliotti, Javaloyes and Piccione \cite{biljavapic}, which proves genericity of semi--Riemannian metrics on a (possibly noncompact) manifold $M$ without degenerate geodesics joining two arbitrarily fixed distinct points $p,q\in M$.

The goal of this paper is to extend this result when more general boundary conditions on geodesics are considered. Our main result asserts that such non\-de\-ge\-ne\-racy property is also generic considering geodesics with endpoints in an \emph{admissible general boundary condition}. More precisely, consider $(M,g)$ a $n$--dimensional semi--Riemannian manifold of index $\nu$. A \emph{general boundary condition} for the geo\-de\-sic variational problem on $M$ is an arbitrary compact submanifold $\p$ of the product $M\times M$ that does not have a particular $\nu$--topological obstruction\footnote{This obstruction is explained in detail in Remark \ref{semiriemanniansubmanifold}. Our assumption on the submanifold $\p\subset M\times M$ is that it admits semi-Riemannian metrics of index $n$ that are given as restrictions of product metrics $g\oplus (-g)$ on $M\times M$, where $g$ is a metric of index $\nu$ on $M$.}. Geodesics considered are affinely parametrized $g$--geodesics whose endpoints lie in $\p$ and whose tangent vectors are orthogonal to $\p$ at these points. Such geodesics will be called \emph{$(g,\p)$--geodesics}. We find suitable admissibility conditions on $\p$ (see Definition \ref{def:admgbc}) under which the set of metrics $g$ of index $\nu$ such that all $(g,\p)$--geodesics are nondegenerate is $C^k$--generic in some appropriate space of semi--Riemannian metric structures on $M$. This is the content of our main result, Theorem \ref{bigone}.

The case studied in \cite{biljavapic} corresponds to $\p=\{p\}\times\{q\}$, with the hypothesis that $p\ne q$. Therefore, the case of nonconstant geodesic loops at a point $p$ is left open, and it is conjectured that the same genericity statement holds. Theorem \ref{bigone} answers positively this conjecture, once $\p=\{p\}\times\{p\}$ satisfies the mentioned admissibility conditions (Definition \ref{def:admgbc}). Such conditions hold, for instance, when $\p$ does not intersect the diagonal $\Delta\subset M\times M$, or, more generally, when it intersects $\Delta$ transversally (Proposition \ref{admissibility}).

Nevertheless, these admissibility conditions mentioned trivially fail for \emph{bumpy} boundary conditions $\p=\Delta$. In this particular case, that corresponds to periodic geodesics, a similar nondegeneracy genericity statement holds due to the recent proof of the semi--Riemannian version of the bumpy metric theorem. This is a result of Biliotti, Javaloyes and Piccione \cite{biljavapic2}, using equivariant variational techniques, rather than dynamical. Such result is used in a nontrivial way in the proof of Theorem \ref{bigone} if $\p\cap\Delta\neq\varnothing$. In addition, it is important to stress that the genericity results of \cite{biljavapic,biljavapic2} combined do \emph{not} automatically imply genericity of metrics without de\-ge\-ne\-ra\-te geodesics under general boundary conditions. Essentially, the de\-ge\-ne\-racy notions considered are different (see Remark \ref{degeneracynotions}). Suppose $\p\cap\Delta\neq\varnothing$. Then there may be nontrivial Jacobi fields that degenerate a periodic geodesic as a periodic geodesic, but not as a $(g,\p)$--geodesic. Therefore, a more involved argument is required. In general lines, this is done using the semi--Riemannian bumpy metric theorem to ensure that one may first restrict to metrics without degenerate periodic geodesics, which are generic. Only then an abstract criterion (Proposition \ref{abstractgenericity}) is used to prove genericity of metrics without degenerate geodesics with boundary conditions $\p$. For this, a particularly degenerate class of geodesics is studied (Subsection \ref{sec:stdeg}) and the admissibility property is used in a crucial form.

Several geometric interpretations of this result are possible. For instance, consider $P\subset M$ a fixed compact submanifold without $\nu$-topological obstructions (Remark \ref{semiriemanniansubmanifold}) and $q\in M$ is a fixed point. Since $\p=P\times\{q\}$ satisfies the admissibility conditions mentioned above (Example \ref{adexgbc}), our result can be applied. In this setting, it asserts that $q$ is not focal to $P$ in a $C^k$--generic semi-Riemannian metric. It extends the genericity of the nonconjugacy property for two fixed distinct points, that correspond to the fixed endpoints case treated in \cite{biljavapic}.

We now provide a short overview of the paper topics. A few preliminaries and notation are established in Section \ref{sec:first}. We recall the definition of \emph{$C^k$--Whitney type Banach space of tensor fields} over a manifold and explore some elementary aspects of semi--Riemannian geodesics. In Section \ref{abstractresult}, we reproduce an abstract ge\-ne\-ri\-ci\-ty criterion (Proposition \ref{abstractgenericity}) used in the proof of several genericity results. This theorem is present in both \cite{biljavapic} and \cite{chillingsworth} and was successfully used to establish the genericity results of \cite{biljavapic} mentioned before. It follows the lines of a standard transversality argument by White \cite{white}, that uses the Sard--Smale theorem \cite{smale} for a family of nonlinear Fredholm functionals $f_x$ on a Hilbert manifold, parametrized in a Banach manifold. Briefly, it asserts that the values of $x$ such that $f_x$ has only nondegenerate critical points is generic, under suitable regularity conditions. This abstract genericity criterion is also used in the proof of the semi--Riemannian bumpy metric theorem \cite{biljavapic2}, and in other contexts such as \cite{ggp}. In Section \ref{sec:gbc}, we introduce the concept of admissible general boundary conditions, and explore a few particular cases. Furthermore, the admissibility of a large class of boundary conditions is established in Proposition \ref{admissibility}. In Section \ref{secgenericity}, we prove our main result, Theorem \ref{bigone}. Finally, in Subsection \ref{sec:cinfty} it is improved to the weak $C^\infty$--topology, although in principle the arguments used in the proof do not apply directly, due to lack of regularity of the metric tensors space.

\section{Preliminaries and notations}\label{sec:first}

Throughout the text $M$ will denote a smooth manifold of finite dimension $n$, and by \emph{smooth} we will always mean of class $C^\infty$. Regarding differentiability of tensors, particularly metric tensors, which will usually be of class $C^k$, we will implicitly consider $k\geq 2$. Furthermore, $g_\mathrm R$ will denote a fixed complete Riemannian metric on $M$.

\subsection{Banach spaces of sections}

Let $p:E\rightarrow M$ a vector bundle. Then $\sect^k(E)$ is the space of $C^k$ sections of $E$, and in the case $E=TM^*\otimes TM^*$, we denote by $\sect^k_{\mbox{\tiny sym}}(TM^*\otimes TM^*)$ the set of $C^k$ sections $s$ such that $s_x:T_xM\times T_xM\rightarrow\R$ is symmetric for all $x$. Given another smooth manifold $N$ and a smooth map $f:N\rightarrow M$, the pull--back by $f$ of vector bundle $E$ will be denoted $f^*E$. Finally, $\met_\nu^k(M)$ is the set of all semi--Riemannian $C^k$ metric tensors of index $\nu\in\{0,\dots,n\}$, which is a subset of $\sect^k_{\mbox{\tiny sym}}(TM^*\otimes TM^*)$.

If $M$ is compact, $\sect^k_{\mbox{\tiny sym}}(TM^*\otimes TM^*)$ has a natural Banach space structure, and $\met_\nu^k(M)$ is an open subset. Adopting the approach in \cite[Subsection 4.1]{biljavapic} to endow the space of tensors over a noncompact manifold $M$ with a Banach space structure, consider the following.

\begin{definition}
A vector subspace $\mathcal{E}$ of $\sect^k_{\mbox{\tiny sym}}(TM^*\otimes TM^*)$ is called a {\it $C^k$--Whitney type Banach space of tensor fields} over $M$ if

\begin{itemize}
\item[(i)] $\mathcal{E}$ contains all tensor fields in $\sect^k_{\mbox{\tiny sym}}(TM^*\otimes TM^*)$ having compact support;
\item[(ii)] $\mathcal{E}$ is endowed with a Banach space norm $\|\cdot\|_{\mathcal{E}}$ with the property that $\|\cdot\|_{\mathcal{E}}$--convergence of a sequence implies convergence in the weak Whitney $C^k$--topology.
\end{itemize}

The second condition means that given any sequence $\{\mathfrak{b}_\alpha\}$ and an element $\mathfrak{b}_\infty\in\mathcal{E}$ such that $\lim \|\mathfrak{b}_\alpha-\mathfrak{b}_\infty\|_\mathcal{E}=0$, for each compact set $K\subset M$, the restriction $\mathfrak{b}_\alpha|_K$ tends to $\mathfrak{b}_\infty|_K$ in the $C^k$ topology as $\alpha$ tends to $\infty$.
\end{definition}

\begin{remark}
Using the auxiliary Riemannian metric $g_\mathrm{R}$ on $M$ it is possible to construct $C^k$--Whitney type Banach space of tensors on $M$ as follows. Firstly, we observe that the Levi--Civita connection $\nabla^\mathrm{R}$ of $g_\mathrm{R}$ induces a connection on all vector bundles over $M$ obtained with functorial constructions from the tangent bundle $TM$. Furthermore, for each $r,s\in\N$, $g_\mathrm{R}$ induces canonical Hilbert space norms on each tensor bundle $T_xM^{*(r)}\otimes T_xM^{(s)}$, which will be denoted $\|\cdot\|_\mathrm{R}$. Finally, we define $\sect^k_{\mbox{\tiny sym}}(TM^*\otimes TM^*;g_\mathrm{R})$ as the subset of $\sect^k_{\mbox{\tiny sym}}(TM^*\otimes TM^*)$ consisting of all sections $\mathfrak{b}$ such that $$\|\mathfrak{b}\|_k=\max_{0\leq i\leq k}\Big[\sup_{x\in M} \Big\|(\nabla^\mathrm{R})^i \mathfrak{b}(x)\Big\|_\mathrm{R}\Big]<\infty.$$

When $M$ is compact, $\sect^k_{\mbox{\tiny sym}}(TM^*\otimes TM^*;g_\mathrm{R})=\sect^k_{\mbox{\tiny sym}}(TM^*\otimes TM^*)$. The norm $\|\cdot\|_k$ defined above turns $\sect^k_{\mbox{\tiny sym}}(TM^*\otimes TM^*;g_\mathrm{R})$ into a separable normed space, which is complete if the Riemannian metric $g_\mathrm{R}$ is complete. It is then easy to see that $\sect^k_{\mbox{\tiny sym}}(TM^*\otimes TM^*;g_\mathrm{R})$ is a $C^k$--Whitney type Banach space of tensors.
\end{remark}

We will use the following result proved in \cite[Lemma 2.4]{biljavapic}, concerning the existence of global section of a vector bundle with prescribed value and covariant derivative along a sufficiently small curve.

\begin{lemma}\label{extension}
Let $p:E\rightarrow B$ be a smooth vector bundle endowed with a connection $\nabla$, $\gamma:[a,b]\rightarrow M$ a smooth curve and $v\in\sect(\gamma^*TM)$ a smooth vector field along $\gamma$, such that $v(t_0)$ is not parallel to $\dot{\gamma}(t_0)$ for some $t_0\in \; ]a,b[$. Then there exists an open interval $I\subset [a,b]$ containing $t_0$ with the property that, given smooth sections $H$ and $K$ of $\gamma^*E$ with compact support in $I$ and given any open set $U$ containing $\gamma(I)$, there exists $h\in\sect(E)$ with compact support contained in $U$, such that $h_{\gamma(t)}=H_t$ and $\nabla_{v(t)} h=K_t$ for all $t\in I$.
\end{lemma}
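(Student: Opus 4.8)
The plan is to choose, near $\gamma(t_0)$, coordinates $(t,s)$ in which $\gamma$ is the $t$--axis and $v$ is the coordinate field $\partial/\partial s$ along that axis --- possible precisely because $v(t_0)$ is not parallel to $\dot\gamma(t_0)$ --- which reduces the statement to an elementary one--variable interpolation. Since $v(t_0)\not\parallel\dot\gamma(t_0)$ forces $\dot\gamma(t_0)\neq0$, by continuity there is $\rho>0$ with $[t_0-\rho,t_0+\rho]\subset\;]a,b[$ such that $\gamma$ restricts to an embedding on this interval, with image a compact arc $C$, and such that $\dot\gamma(t)$ and $v(t)$ are linearly independent for $|t-t_0|\leq\rho$. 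First I would use a tubular neighborhood of $C$ to extend $v$ to a smooth vector field $V$ on an open set $\mathcal O\supset C$ with $V(\gamma(t))=v(t)$ for all $t\in[t_0-\rho,t_0+\rho]$. Letting $\psi_s$ denote the local flow of $V$, set $\Phi(t,s):=\psi_s(\gamma(t))$; this is smooth near $[t_0-\rho,t_0+\rho]\times\{0\}$ in $\R^2$, with $\partial_t\Phi(t,0)=\dot\gamma(t)$ and $\partial_s\Phi(t,0)=V(\gamma(t))=v(t)$ linearly independent. Hence $\Phi$ is a local diffeomorphism at every point of $[t_0-\rho,t_0+\rho]\times\{0\}$ and, being injective there (it is $\gamma$), a standard argument produces an open interval $I\ni t_0$ with $\overline I\subset\;]t_0-\rho,t_0+\rho[$ and a $\delta_0>0$ such that $\Phi$ restricts to a diffeomorphism of $I\times(-\delta_0,\delta_0)$ onto an open set $W_0\subset M$. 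This $I$ is the interval in the statement, and in the coordinates $(t,s)$ it provides one has $\gamma(t)=\Phi(t,0)$ and $v(t)=\partial/\partial s|_{(t,0)}$.

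Next I would fix $H,K$ with compact support in $I$, a compact interval $J\subset I$ containing $\operatorname{supp}H\cup\operatorname{supp}K$, and an open $U\supset\gamma(I)$. Since $\gamma(J)=\Phi(J\times\{0\})$ is a compact subset of the open set $W_0\cap U$, there is $\delta\in(0,\delta_0)$ with $\Phi(J\times(-\delta,\delta))\subset U$. Put $W:=\Phi(I\times(-\delta,\delta))$, which is diffeomorphic to a box, so $E|_W$ is trivial; fix a local frame $e_1,\dots,e_m$ of $E$ over $W$ and let $\omega^a_b$ be the associated connection $1$--forms ($\nabla e_b=\sum_a\omega^a_b\,e_a$). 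Writing a section of $E|_W$ as $h=\sum_a\sigma^a\,e_a$, and $H_t=\sum_a H^a_t\,e_a(\gamma(t))$, $K_t=\sum_a K^a_t\,e_a(\gamma(t))$, and setting $f^a(t,s):=\sigma^a(\Phi(t,s))$, the requirements $h_{\gamma(t)}=H_t$ and $\nabla_{v(t)}h=K_t$ translate, using $v(t)=\partial/\partial s|_{(t,0)}$, into $f^a(t,0)=H^a_t$ and $\tfrac{\partial f^a}{\partial s}(t,0)=K^a_t-\sum_b\omega^a_b\big(v(t)\big)H^b_t=:L^a_t$, where $H^a$ and $L^a$ are smooth functions supported in $J$.

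To conclude I would solve this by hand: pick $\chi\in C_c^\infty\big((-\delta,\delta)\big)$ with $\chi\equiv1$ near $0$ (so $\chi(0)=1$, $\chi'(0)=0$) and set $f^a(t,s):=\chi(s)\big(H^a_t+s\,L^a_t\big)$; then $f^a(\cdot,0)=H^a$ and $\partial_s f^a(\cdot,0)=L^a$, while $\operatorname{supp}f^a\subset J\times\operatorname{supp}\chi$ is a compact subset of $I\times(-\delta,\delta)$. Transporting back by $\Phi$ and extending by zero yields smooth functions $\sigma^a$ on $M$ with $\operatorname{supp}\sigma^a\subset\Phi(J\times\operatorname{supp}\chi)\subset\Phi(J\times(-\delta,\delta))\subset U$, so $h:=\sum_a\sigma^a\,e_a$ (extended by zero) lies in $\sect(E)$, has compact support contained in $U$, and satisfies $h_{\gamma(t)}=H_t$, $\nabla_{v(t)}h=K_t$ for $t\in J$ by construction. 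Finally, for $t\in I\setminus J$ both $H_t$ and $K_t$ vanish, so it is enough that $h$ vanish near $\gamma(t)$; and if $\gamma(t)=\Phi(t,0)$ lay in $\operatorname{supp}h\subset\Phi(J\times\operatorname{supp}\chi)$, injectivity of $\Phi$ on $I\times(-\delta,\delta)$ would force $t\in J$, a contradiction.

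I expect the only genuinely delicate step to be the first paragraph: building the flow box adapted to $v$, which needs the extension of $v$ to an honest vector field (legitimate because $\dot\gamma(t_0)\neq0$ makes $\gamma$ locally embedded) and the shrinking to an actual chart. Once coordinates with $v=\partial/\partial s$ are in place, what remains is the one--variable interpolation above together with support bookkeeping, the latter kept painless by the injectivity of $\Phi$.
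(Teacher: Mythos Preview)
The paper does not prove this lemma; it merely quotes it from \cite[Lemma~2.4]{biljavapic}, so there is no in--house argument to compare against. Your approach --- build adapted coordinates in which $\gamma$ is a coordinate axis and $v$ is $\partial/\partial s$ along it, then solve the resulting first--order interpolation by hand --- is the natural one and is essentially what the cited reference does.

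There is, however, a genuine gap when $\dim M>2$. Your map $\Phi:I\times(-\delta_0,\delta_0)\to M$ has $2$--dimensional domain, so its image $W_0$ is an embedded surface, \emph{not} an open subset of $M$ unless $n=2$. Consequently ``trivialize $E|_W$, define $h$ on $W$, extend by zero'' does not produce a global section of $E$: the surface $W$ has empty interior in $M$, and a smooth section supported in $W$ is identically zero. Everything from ``$E|_W$ is trivial'' onward is therefore only valid for $n=2$.

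The repair is minor. After building the embedded surface $\Sigma:=\Phi(I\times(-\delta_0,\delta_0))$, take a tubular neighbourhood $\pi:N(\Sigma)\to\Sigma$ with $N(\Sigma)\subset U$ (shrinking $\delta_0$ if necessary), choose a fibrewise cutoff $\eta\in C_c^\infty(N(\Sigma))$ with $\eta\equiv1$ near $\Sigma$, and replace your $h$ by $\eta\cdot(\pi^*h)$, where $\pi^*h$ means the section obtained by parallel transporting your surface section along the tubular fibres (or simply pulling back coefficients in a local frame over $N(\Sigma)$). Since $\gamma(I)\subset\Sigma$ and $\eta\equiv1$ there, the values $h_{\gamma(t)}$ are unchanged; and since $v(t)=\partial/\partial s$ is tangent to $\Sigma$, the covariant derivative $\nabla_{v(t)}h$ along $\gamma$ depends only on the restriction of $h$ to $\Sigma$ and on the ambient connection coefficients, so the condition $\nabla_{v(t)}h=K_t$ survives as well. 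Alternatively, complete $(\dot\gamma(t_0),v(t_0))$ to a basis of $T_{\gamma(t_0)}M$, extend all $n$ vectors to commuting local fields, and build a genuine $n$--dimensional chart $\Phi:I\times(-\delta_0,\delta_0)\times(-\varepsilon,\varepsilon)^{n-2}\to M$ from the outset; then your one--variable interpolation (now with an extra cutoff in the last $n-2$ variables) goes through verbatim.
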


\subsection{Semi--Riemannian basics}

We now recall some elementary concepts of semi--Riemannian geometry and make a few conventions. Given any symmetric $(0,2)$--tensor $\mathfrak{b}$ on $M$, for instance a semi--Riemannian metric, for all $x\in M$, the bilinear map $\mathfrak b(x)$ will be identified with the linear operator $$\mathfrak{b}(x):T_xM\longrightarrow T_xM^*.$$

Let $\nabla$ be an arbitrary symmetric connection on $TM$. Given another connection $\nabla'$, the difference $$\chr=\nabla'-\nabla$$ is a $(1,2)$--tensor called the {\it Christoffel tensor} of $\nabla'$ relatively to $\nabla$, which can be computed using Koszul's formula. The connection $\nabla$ induces a covariant derivative of vector fields along curves on $M$, which will be denoted $\D$. In case $\nabla^g$ is the Levi--Civita connection of $g\in\met_\nu^k(M)$, the corresponding operator of covariant derivative for vector fields along curves will be denoted $\D^g$; and for the fixed Riemannian metric $g_\mathrm R$, it will be simply denoted $\D^\mathrm R$. The Riemannian length of a curve $\gamma:[a,b]\rightarrow M$ with respect to $g_\mathrm R$ will be denoted $$L_\mathrm R(\gamma)=\int_a^b \left\| \dot\gamma(t) \right\|_\mathrm R \;\mathrm{d}t.$$

The sign convention adopted for the curvature tensor of $\nabla^g$ is $$R^g(X,Y)=[\nabla^g_X,\nabla^g_Y]-\nabla^g_{[X,Y]}.$$ Let $\gamma$ be a solution of the $g$--geodesic equation on $M$. Then $\gamma$ will be called a \emph{$g$--geodesic} only if it is \emph{affinely parametrized}. A {\it Jacobi field} along a $g$--geodesic $\gamma$ is a smooth section $J\in\sect^k(\gamma^*TM)$ satisfying the {\it Jacobi equation} $$(\D^g)^2 J=R^g(\dot{\gamma},J)\dot{\gamma}.$$ The endpoints of $\gamma$ are said to be \emph{conjugate} along $\gamma$ if there exists a nontrivial Jacobi field along $\gamma$ that vanishes at both endpoints of $\gamma$. Affine multiples of the tangent field $\dot\gamma$ are trivially Jacobi fields, and conversely, the only Jacobi fields along $\gamma$ that are everywhere parallel to $\dot\gamma$ are its affine multiples. In addition, Jacobi fields are only parallel to $\dot\gamma$ at isolated points.

\begin{lemma}\label{parallelfinite}
Let $\gamma:[a,b]\rightarrow M$ be a $g$--geodesic and $J$ a nontrivial Jacobi field along $\gamma$, that is not everywhere parallel to $\dot{\gamma}$. Then $\mathcal{D}=\{t\in[a,b]:J(t) \mbox{ is parallel to }\dot{\gamma}\}$ consists only of isolated points, hence is finite.
\end{lemma}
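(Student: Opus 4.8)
The plan is to derive the statement from the uniqueness of solutions of the linear Jacobi equation. First I would dispose of a degenerate case: since $\D^g\dot\gamma=0$, the components of $\dot\gamma$ in a $\D^g$--parallel frame along $\gamma$ are constant, so $\dot\gamma$ either vanishes identically or is nowhere zero; in the former case a Jacobi field is affine in such a frame and the claim is elementary, so assume $\dot\gamma$ is nowhere zero. Fix a $\D^g$--parallel frame $e_1,\dots,e_n$ along $\gamma$ and write $J=\sum_i a^i e_i$ and $\dot\gamma=\sum_i c^i e_i$ with $c=(c^1,\dots,c^n)$ a fixed nonzero vector. Then $t\in\mathcal{D}$ is precisely the vanishing of the continuous functions $a^i c^j-a^j c^i$, so $\mathcal{D}$ is closed, and it suffices to show that $\mathcal{D}$ has no accumulation point.

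Suppose, for contradiction, that $t_*$ is an accumulation point of $\mathcal{D}$; then $t_*\in\mathcal{D}$, so $J(t_*)$ is parallel to $\dot\gamma(t_*)$, and we may pick $t_n\in\mathcal{D}\setminus\{t_*\}$ with $t_n\to t_*$. If $J(t_*)\neq 0$, write $J(t_*)=\mu\,\dot\gamma(t_*)$ with $\mu\in\R$ and set $\tilde J:=J-\mu\,\dot\gamma$; if $J(t_*)=0$, put $\mu:=0$ and $\tilde J:=J$. Since $\dot\gamma$ is itself a Jacobi field and the Jacobi equation is linear, $\tilde J$ is again a Jacobi field, with $\tilde J(t_*)=0$; moreover, for every $t\in\mathcal{D}$ the vector $\tilde J(t)=J(t)-\mu\,\dot\gamma(t)$ lies in the line $\R\,\dot\gamma(t)$, hence is parallel to $\dot\gamma(t)$.

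The key step is to show that $\D^g\tilde J(t_*)$ is also parallel to $\dot\gamma(t_*)$. Writing $\tilde J=\sum_i\tilde a^i e_i$, the map $\tilde a=(\tilde a^1,\dots,\tilde a^n)$ is $C^1$ with $\tilde a(t_*)=0$, and for each $t\in\mathcal{D}$ one has $\tilde a(t)=\nu(t)\,c$ for some $\nu(t)\in\R$. Hence the components of $\D^g\tilde J(t_*)$ equal $\lim_n \big(\tilde a(t_n)-\tilde a(t_*)\big)/(t_n-t_*)=\lim_n \big(\nu(t_n)/(t_n-t_*)\big)\,c$, a limit of scalar multiples of the fixed nonzero vector $c$, which is again a multiple $\lambda\,c$ of $c$. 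So $\D^g\tilde J(t_*)=\lambda\,\dot\gamma(t_*)$ for some $\lambda\in\R$. Now $t\mapsto\lambda(t-t_*)\,\dot\gamma(t)$ is a Jacobi field with value $0$ and covariant derivative $\lambda\,\dot\gamma(t_*)$ at $t_*$, the same initial data as $\tilde J$; by uniqueness of solutions of the Jacobi equation, $\tilde J(t)=\lambda(t-t_*)\,\dot\gamma(t)$ for all $t$, and therefore $J=\mu\,\dot\gamma+\lambda(t-t_*)\,\dot\gamma$ is everywhere parallel to $\dot\gamma$, contradicting the hypothesis. Thus $\mathcal{D}$ has no accumulation point, and being closed in the compact interval $[a,b]$ it is finite. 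I expect the only slightly delicate point to be this limit computation identifying $\D^g\tilde J(t_*)$ as a multiple of $\dot\gamma(t_*)$; the remainder is just the standard well--posedness of the linear Jacobi ODE.
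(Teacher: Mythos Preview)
Your proof is correct and follows essentially the same strategy as the paper's: assume an accumulation point of $\mathcal{D}$, show that both the Jacobi field and its covariant derivative are parallel to $\dot\gamma$ there, and invoke uniqueness for the Jacobi ODE to force $J$ to be an affine multiple of $\dot\gamma$. The only cosmetic difference is that the paper chooses the parallel frame with $e_1=\dot\gamma$, so that ``$J$ parallel to $\dot\gamma$'' becomes simply $J_i=0$ for $i\ge 2$ and the derivative claim is immediate from the accumulation of zeros; you instead work in an arbitrary parallel frame and subtract $\mu\,\dot\gamma$ before taking the difference quotient, which is slightly more roundabout but equivalent.
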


\begin{proof}
Consider a basis of $T_{\gamma(a)}M$ given by $(\dot{\gamma}(a),e_2,\dots,e_n)$ and its parallel transport along $\gamma$ creating a frame $(e_1(t),e_2(t),\dots,e_n(t))$, with $e_1(t)=\dot{\gamma}(t)$. Then, writing $J=\sum_{i=1}^n J_i(t)e_i(t)$, $J$ is parallel to $\dot{\gamma}$ at time $t$ if and only if $J_i(t)=0$, for $i\geq 2$. Suppose that there exists a limit $t_\infty\in [a,b]$ of a sequence $\{t_\alpha\}$ of different elements of $\mathcal{D}$. From continuity of $J$ it follows that $t_\infty \in\mathcal{D}$. Thus for each $i\geq 2$, the coordinate function $J_i(t)$ has a convergent sequence of zeros $\{t_\alpha\}$ and hence $J_i'(t_\infty)=0$. Therefore, the covariant derivative $\D^g J(t_\infty)$ is also parallel to $\dot{\gamma}$.

It is then possible to find $c_1,c_2\in\R$ such that $\tilde{J}=(c_1+c_2t)\dot{\gamma}(t)$ satisfies $\tilde{J}(t_\infty)=J(t_\infty)$ and $\D^g \tilde{J}(t_\infty)=\D^g J(t_\infty)$. Since the Jacobi equation is a second order linear ODE, $\tilde{J}=J$. Hence $J$ is always parallel to $\dot{\gamma}$, a contradiction.
\end{proof}

\subsection{Geodesics self intersections}

The following elementary results will be used later to deal with geodesic self intersection problems.

\begin{lemma}\label{finiteintersection}
Let $\gamma_i:[a_i,b_i]\rightarrow M$ two $g$--geodesics. Then the set of points where these geodesics intersect is finite, unless one is an affine reparametrization of the other.
\end{lemma}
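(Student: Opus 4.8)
The plan is to argue by contradiction, assuming the intersection set is infinite while neither geodesic is an affine reparametrization of the other. Since $[a_1,b_1]\times[a_2,b_2]$ is compact, an infinite set of intersection pairs $(s_\alpha,t_\alpha)$ (meaning $\gamma_1(s_\alpha)=\gamma_2(t_\alpha)$) has a subsequence converging to some $(s_\infty,t_\infty)$, with $\gamma_1(s_\infty)=\gamma_2(t_\infty)=:x$ by continuity. The first case to dispose of is when the accumulation happens with, say, $s_\alpha\to s_\infty$ but $t_\alpha$ not converging to a single value along a subsequence giving distinct intersection points on $\gamma_1$; more precisely, I would split according to whether the intersection points $\gamma_1(s_\alpha)$ themselves accumulate at $x$ on the image of $\gamma_1$. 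If infinitely many of the $s_\alpha$ are equal, then $\gamma_2$ passes through the single point $\gamma_1(s)$ infinitely often, forcing infinitely many distinct $t_\alpha$ to accumulate, and then $\dot\gamma_2$ vanishes at the limit by the same zero-of-a-curve argument — impossible for an affinely parametrized geodesic (a nonconstant geodesic has nowhere-vanishing velocity). So we may assume the $s_\alpha$ are distinct (passing to a subsequence) and likewise the $t_\alpha$ are distinct.

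Now both $\gamma_1$ and $\gamma_2$ are geodesics for the same connection $\nabla^g$ passing through $x$. The key step is to show their velocities at $x$ are parallel. Consider the curve $\sigma(\alpha)$ interpolating; more cleanly, pick coordinates near $x$ and note $\gamma_1(s_\alpha)=\gamma_2(t_\alpha)$ with $s_\alpha\to s_\infty$, $t_\alpha\to t_\infty$. Reparametrize $\gamma_2$ by a monotone continuous map so that... actually the cleanest route: since $\{s_\alpha\}$ has a limit $s_\infty$ which is an interior point (after discarding at most the two endpoints, which can only contribute finitely), the difference quotients $\frac{\gamma_1(s_\alpha)-\gamma_1(s_\infty)}{s_\alpha-s_\infty}\to \dot\gamma_1(s_\infty)$ and similarly $\frac{\gamma_2(t_\alpha)-\gamma_2(t_\infty)}{t_\alpha-t_\infty}\to\dot\gamma_2(t_\infty)$ in a coordinate chart. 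Since the numerators agree (as $\gamma_1(s_\alpha)=\gamma_2(t_\alpha)$ and $\gamma_1(s_\infty)=\gamma_2(t_\infty)=x$), we get $\dot\gamma_1(s_\infty) = \lambda\, \dot\gamma_2(t_\infty)$ where $\lambda=\lim \frac{t_\alpha-t_\infty}{s_\alpha-s_\infty}$, provided this limit exists; one extracts a further subsequence so that it does (it lies in $[-\infty,+\infty]$, and $\lambda=0$ or $\lambda=\pm\infty$ are excluded since both velocities are nonzero), hence $\lambda\in\R\setminus\{0\}$ and the two velocity vectors at $x$ are parallel.

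Finally, once $\dot\gamma_1(s_\infty)$ and $\dot\gamma_2(t_\infty)$ are parallel at $x$, the geodesic through $x$ with a given velocity direction is unique up to affine reparametrization (the geodesic equation $\D^g\dot\gamma=0$ is a second-order ODE, and rescaling the velocity just rescales the affine parameter). Therefore $\gamma_1$ and $\gamma_2$ trace out the same geodesic and one is an affine reparametrization of the other, contradicting our assumption. I expect the main obstacle to be the bookkeeping in extracting subsequences so that the ratio $\lambda$ converges and in ruling out the degenerate sub-cases (repeated parameter values, accumulation at endpoints) cleanly; the underlying geometric input — nonvanishing geodesic velocity plus ODE uniqueness — is standard, and the structure mirrors the proof of Lemma \ref{parallelfinite} already given, so I would phrase it to parallel that argument.
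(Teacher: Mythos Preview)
Your argument is essentially correct, and the underlying mechanism --- accumulation point plus ODE uniqueness --- is the same as the paper's. The route differs in how you establish that the two velocities at the accumulation point are parallel. The paper argues by dichotomy: it takes a normal neighborhood $U$ of the accumulation point $p=\gamma_1(t)=\gamma_2(s)$, and if $\dot\gamma_1(t)$ and $\dot\gamma_2(s)$ were linearly independent, the two geodesic arcs through $p$ in $U$ would be distinct images under $\exp_p$, so they could meet only at $p$ --- contradicting the accumulation of intersection points near $p$. Your version instead extracts the parallelism directly, via difference quotients in a chart and a subsequence so that the ratio $(t_\alpha-t_\infty)/(s_\alpha-s_\infty)$ converges to a finite nonzero $\lambda$.

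Your approach is more hands-on and avoids invoking the exponential map, at the cost of the subsequence bookkeeping you already flag. Two small clean-ups: since the lemma is about the set of intersection \emph{points} in $M$ being infinite, you may from the outset choose parameter pairs $(s_\alpha,t_\alpha)$ with $\gamma_1(s_\alpha)$ pairwise distinct, which immediately forces the $s_\alpha$ (and by symmetry the $t_\alpha$) to be pairwise distinct and makes the ``infinitely many $s_\alpha$ equal'' sub-case vacuous; and the interior-point remark is unnecessary, since one-sided difference quotients work just as well at an endpoint. The paper's normal-neighborhood argument sidesteps all of this in one line, but yours is a perfectly valid alternative.
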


\begin{proof}
Since the images of $\gamma_1$ and $\gamma_2$ are compact, if there were infinitely many intersection points, there would be an accumulation intersection point $p=\gamma_1(t)=\gamma_2(s)$. Consider $U$ a normal neighborhood of $p$. If $\dot{\gamma_1}(t)$ and $\dot{\gamma_2}(s)$ are linearly independent, since there are infinitely many points near $p$ such that $\gamma_1$ and $\gamma_2$ coincide in $U$, there is an obvious contradiction to injectivity of the exponential map on $U$. Otherwise, if $\dot{\gamma_1}(t)$ and $\dot{\gamma_2}(s)$ are linearly dependent, then $\gamma_1$ and $\gamma_2$ are affine reparametrizations of each other.
\end{proof}

\begin{proposition}\label{selfintersections}
Let $\gamma:[0,1]\rightarrow M$ be a $g$--geodesic in $M$. If the set
\[\mathcal{I}=\big\{(t,s)\in [0,1]\times [0,1] : t\neq s, \gamma(t)=\gamma(s)\big\}\]
is infinite, then $\gamma$ is periodic with period $\omega<1$.
\end{proposition}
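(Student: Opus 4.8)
\emph{Overview.} My plan is a direct argument built on Lemma~\ref{finiteintersection} and uniqueness of geodesics. If $\gamma$ is constant there is nothing to prove, so assume $\gamma$ nonconstant; then $\dot\gamma$ is parallel and nowhere zero, $\gamma$ is locally injective, and each fibre $\gamma^{-1}(q)$ is finite (an accumulation point of such a fibre would contradict local injectivity near it).

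\emph{Step 1: an accumulation point of $\mathcal{I}$ off the diagonal.} Since $\mathcal{I}\subset[0,1]^2$ is infinite, it has an accumulation point $(\bar t,\bar s)$, realized by distinct $(t_\alpha,s_\alpha)\in\mathcal{I}$ converging to it. If $\bar t=\bar s=:\tau$, I would write $\gamma$ in a chart near $\gamma(\tau)$ and apply Rolle's theorem to each coordinate of $\gamma$ on the parameter interval between $t_\alpha$ and $s_\alpha$ (using $\gamma(t_\alpha)=\gamma(s_\alpha)$): this produces points converging to $\tau$ at which that coordinate of $\dot\gamma$ vanishes, so $\dot\gamma(\tau)=0$, contradicting nonconstancy. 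Hence $\bar t\neq\bar s$; say $\bar t<\bar s$, and put $p=\gamma(\bar t)=\gamma(\bar s)$, $\omega=\bar s-\bar t\in(0,1]$.

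\emph{Step 2: the two branches through $p$ agree up to an affine change of parameter.} Choose small disjoint closed subintervals $I_1\ni\bar t$, $I_2\ni\bar s$ of $[0,1]$ with $\gamma|_{I_1}$, $\gamma|_{I_2}$ injective. For large $\alpha$, $t_\alpha\in I_1$ and $s_\alpha\in I_2$, so $\gamma|_{I_1}$ and $\gamma|_{I_2}$ intersect in infinitely many points of $M$ -- their intersection contains $\{\gamma(t_\alpha)\}=\{\gamma(s_\alpha)\}$, which is infinite since otherwise some fibre $\gamma^{-1}(q)$ would be infinite. By Lemma~\ref{finiteintersection}, $\gamma|_{I_2}$ is an affine reparametrization of $\gamma|_{I_1}$; because each restriction is injective and hits $p$ only at $\bar t$, resp.\ $\bar s$, the reparametrization must be $s\mapsto\bar t+\lambda(s-\bar s)$ for some $\lambda\neq0$, so $\gamma(\bar s+r)=\gamma(\bar t+\lambda r)$ for $r$ near $0$ and, differentiating, $\dot\gamma(\bar s)=\lambda\,\dot\gamma(\bar t)$. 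Since $u\mapsto\gamma(u)$ and $u\mapsto\gamma\big(\bar t+\lambda(u-\bar s)\big)$ are affinely parametrized geodesics agreeing to first order at $u=\bar s$, uniqueness of geodesics propagates the identities $\gamma(u)=\gamma(T(u))$ and $\dot\gamma(u)=\lambda\,\dot\gamma(T(u))$, with $T(u):=\bar t+\lambda(u-\bar s)$, to the whole interval $[0,1]\cap T^{-1}([0,1])$, which contains $\bar s$.

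\emph{Step 3: $\lambda=1$ and conclusion.} Since $g(\dot\gamma,\dot\gamma)$ is constant along $\gamma$ while $\dot\gamma(u)=\lambda\,\dot\gamma(T(u))$, one gets $\lambda^2=1$ whenever $g(\dot\gamma,\dot\gamma)\neq0$; moreover $\lambda=-1$ is always impossible, for then $T$ is the reflection about the midpoint $m$ of $[\bar t,\bar s]$, whence $\gamma(m+r)=\gamma(m-r)$ and $\dot\gamma(m)=0$. For a null $\gamma$ one must additionally exclude $|\lambda|\neq1$: then $T$, or its inverse, is a contraction with a unique fixed point $u_*$, and if $u_*\in[0,1]$ iterating $\gamma(u)=\gamma(T(u))$ forces $\gamma$ to be constant near $u_*$. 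Granting $\lambda=1$, we have $\omega<1$ -- otherwise $\bar t=0$, $\bar s=1$, and the affine reparametrization, sending a left neighborhood of $1$ into a right neighborhood of $0$ and fixing the value $0$ at $1$, would have negative slope $\lambda$ -- so $\gamma(v+\omega)=\gamma(v)$ for $v\in[0,1-\omega]$ exhibits $\gamma$ as the restriction of a periodic geodesic of period $\omega<1$. The step I expect to be the main obstacle is pinning down $\lambda=1$: for non-null $\gamma$, in particular for Riemannian $g$, it falls out of the constancy of $g(\dot\gamma,\dot\gamma)$, but the null case genuinely requires geometric input -- to rule out a self-similar reparametrization of $\gamma$ and to keep the fixed point $u_*$ inside $[0,1]$.
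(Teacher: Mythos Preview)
Your approach mirrors the paper's: locate an accumulation point $(\bar t,\bar s)$ of $\mathcal I$ off the diagonal, restrict $\gamma$ to short intervals around $\bar t$ and $\bar s$, invoke Lemma~\ref{finiteintersection} to obtain an affine reparametrization between the two pieces, and then argue that this affine map is the translation by $\omega=\bar s-\bar t$. The paper is much terser than you at precisely the step you isolate: after Lemma~\ref{finiteintersection} it simply writes ``both are restrictions of the same geodesic $\gamma_\varepsilon$, hence $\gamma_1(t+\omega)=\gamma_2(t)$'', taking $\lambda=1$ for granted. (One device from the paper worth borrowing: it first extends $\gamma$ to $[-\varepsilon,1+\varepsilon]$, so the endpoint case $\bar t=0$, $\bar s=1$ is disposed of by local injectivity of the extension near $0$ rather than by your orientation argument.) For non-null $\gamma$ your Step~3 is a clean justification of $\lambda=1$: constancy of $g(\dot\gamma,\dot\gamma)$ gives $\lambda^2=1$, and $\lambda=-1$ dies at the midpoint $m=\tfrac12(\bar t+\bar s)\in[0,1]$.

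Your hesitation in the null case is entirely warranted; the gap you flag is genuine and cannot be closed in full generality. On the half-plane $\{u>0\}\subset\R^2$ with the flat Lorentzian metric $du\otimes dv+dv\otimes du$, the isometry $(u,v)\mapsto(2u,v/2)$ generates a free, properly discontinuous $\mathds Z$-action whose quotient $M$ is a smooth Lorentzian cylinder. The null geodesic $\gamma(t)=[(t,0)]$, $t>0$, satisfies $\gamma(2t)=\gamma(t)$ for every $t$, so its restriction to any $[a,b]$ with $b>2a$ (affinely reparametrized to $[0,1]$) has uncountable $\mathcal I$ yet is \emph{not} periodic: $\gamma(t+\omega)=\gamma(t)$ would force $\omega=(2^n-1)t$ to be constant in $t$. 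Here $\lambda=2$ and the fixed point $u_*=0$ sits at the incomplete end of the maximal domain $(0,\infty)$ --- exactly the configuration your contraction argument cannot exclude. Under geodesic completeness the maximal domain becomes $\R$, the identity $\gamma(u)=\gamma(T(u))$ propagates to all of $\R$ by uniqueness, and iterating toward $u_*\in\R$ forces $\gamma$ constant; so with completeness (or for non-null $\gamma$, in particular in the Riemannian case) your argument goes through.
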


\begin{proof}
If $\mathcal{I}$ is infinite, there exists an accumulation point $\left(\overline{t},\overline{s}\right)\in\mathcal{I}$. The local injectivity of $\gamma$ implies that $\overline{t}\neq\overline{s}$, suppose $\overline{t}<\overline{s}$. Take $\varepsilon>0$ small, and define $\gamma_1=\gamma_{\varepsilon}\big|_{\left[\overline{t}-\varepsilon,\overline{t}+\varepsilon\right]}$ and $\gamma_2=\gamma_{\varepsilon}\big|_{\left[\overline{s}-\varepsilon,\overline{s}+\varepsilon\right]}$, where $\gamma_\varepsilon$ is the extension of $\gamma$ to $[-\varepsilon,1+\varepsilon]$. Since $\gamma_1$ and $\gamma_2$ are defined on compact intervals and intersect infinitely many times, from Lemma \ref{finiteintersection}, one is an affine reparametrization of the other. Moreover, both are restrictions of the same geodesic $\gamma_\varepsilon$, hence $\gamma_1(t+\omega)=\gamma_2(t)$ for $t\in \left[\overline{t}-\varepsilon,\overline{t}+\varepsilon\right]$, where $\omega=\overline{s}-\overline{t}\leq 1$. Therefore $\dot{\gamma_1}(\overline{t})=\dot{\gamma_2}(\overline{s})$, hence $\gamma$ is periodic with period $\omega\leq 1$. If $\overline{t}=0$ and $\overline{s}=1$, one can easily derive a contradiction with local injectivity of $\gamma$ around $0$, which implies $\omega <1$.
\end{proof}

\subsection{Submanifold geometry}

We end this section recalling some classic facts about submanifolds of a semi--Riemannian manifold $(\mathcal M,\overline g)$. For our applications, the manifold $\mathcal M$ will be the product $M\times M$, and $\overline g$ will be the semi--Riemannian metric given by the sum of some semi--Riemannian metric $g$ on $M$ and its opposite $-g$. Consider the inclusion $i:\p\hookrightarrow\mathcal{M}$ of a submanifold $\p\subset\mathcal M$; the restriction $i^*\overline{g}$ may degenerate, in which case the submanifold $\p$ is called {\it degenerate}.

To carry the main tools from Riemannian submanifold theory to the semi--Riemannian context, one is forced to restrict to the nondegenerate case. It is then natural to consider \begin{equation}\label{nondegmet}\met_{\nu}^k(\mathcal{M},\p)=\{\overline{g}\in\met_{\nu}^k(\mathcal{M}): \p \mbox{ is nondegenerate}\}. \end{equation}

\begin{remark}\label{semiriemanniansubmanifold}
If $0<\nu <n$, this subset $\met_{\nu}^k(\mathcal{M},\p)$ might be empty, since there are to\-po\-lo\-gi\-cal obstructions to the existence of semi--Riemannian metrics of fixed index on a compact manifold $\p$. For instance, in the Lorentzian case, if $\p$ is orientable, there exists a Lorentzian metric on $\p$ if and only if $\p$ has Euler cha\-rac\-te\-ris\-tic $0$. In general, $\p$ admits a semi--Riemannian metric of index $\nu$ if and only if it admits a distribution of rank $\nu$.

Characteristic classes, in particular the Euler class, can be used for a more comprehensive study of these obstructions. However, in general this is a fairly difficult problem. For instance, if $\mathcal M$ has dimension $6$ and $\p$ is homeomorphic to a $4$--sphere, then $\met_{3}^k(\mathcal{M},\p)$ is empty. This follows easily from the following facts. On the one hand, the restriction to $\p$ of any metric tensor on $\mathcal M$ having index equal to $3$ cannot be positive or negative definite. On the other hand, $\p$ does not admit any metric tensor of index $1$ or $2$, since $\p$ does not admit smooth distributions of rank $1$ or $2$.\footnote{Recall that a compact manifold
admits a semi-Riemannian metric tensor of index $\nu$ if and only if it admits a smooth distribution of rank $\nu$.}
\end{remark}

If $\overline{g}\in\met_{\nu}^k(\mathcal{M},\p)$, the \emph{second fundamental form} of $\p$ in the normal direction $\eta\in T\p^\perp$ is the symmetric bilinear tensor $\s^\p_\eta\in\sect^k_{\mbox{\tiny sym}}(T\p^*\otimes T\p^*)$, given by \begin{equation}\label{sff} \s^\p_\eta(v,w)=\overline{g}(\nabla^{\overline{g}}_v \overline{w},\eta), \end{equation} where $\nabla^{\overline{g}}$ is the Levi--Civita connection on $(\mathcal{M},\overline{g})$ and $\overline{w}$ is a smooth extension of $w$ tangent to $\p$. Using the fact that $\p$ is nondegenerate, we will also identify $\s^\p_\eta$ at a point $p\in \p$ with the $\overline g$--symmetric linear operator $$\big(\s^\p_\eta\big)_p:T_p \p\longrightarrow T_p\p$$ defined by $\overline g\big(\big(\s^\p_\eta\big)_pv,w\big)=\s^\p_\eta(v,w)$, for all $v,w\in T_p\p$.

\section{An abstract genericity criterion}\label{abstractresult}

In this section we recall a result of Biliotti, Javaloyes and Piccione \cite[Section 3]{biljavapic}, that
 appears also in former paper by Chillingsworth \cite{chillingsworth}, which gives a powerful method to obtain genericity of Morse functionals satisfying appropriate transversality conditions. It follows the lines of a standard transversality argument by White \cite{white}. Recall that a subset of a metric space is said to be \emph{generic} if it contains a dense $G_\delta$, that is, countable intersection of open dense subsets. By the Baire theorem, a generic set is dense.

Assume $Y$ is a Hilbert manifold, and $f_x:Y\rightarrow\R$ is a family of functionals parametrized in an open subset of a Banach manifold $X$. Under suitable regularity hypothesis, the set $\mathfrak{M}=\{(x,y):y \mbox{ is a critical point of } f_x\}$ is an embedded submanifold of $X\times Y$, the projection $\Pi:X\times Y\rightarrow X$ is a nonlinear Fredholm map of index zero and its critical values are precisely the set of parameters $x$ such that $f_x$ has some degenerate critical point\footnote{By \emph{degenerate critical point} of a map we mean a point where the Hessian of this map is not injective. In this case, $y_0\in Y$ such that $d^2f_x(y):T_{y_0}Y\rightarrow T_{y_0}Y^*$ is not injective.} in $Y$. Therefore, the problem of genericity of nondegenerate critical points is reduced to a matter of regular values of a Fredholm map. Applying the Sard--Smale theorem \cite{smale}, one achieves the desired genericity property.

More precisely, the abstract genericity criterion can be stated as follows.

\begin{proposition}\label{abstractgenericity}
Let $f:\mathcal{U}\rightarrow\R$ be a $C^k$ map defined in an open subset $\mathcal{U}\subset X\times Y$, where $X$ is a separable Banach manifold and $Y$ a separable Hilbert ma\-ni\-fold. Assume that for every $(x_0,y_0)\in\mathcal{U}$ such that $\frac{\partial f}{\partial y}(x_0,y_0)=0$, the following conditions hold:

\begin{itemize}
\item[(i)] the Hessian $\frac{\partial^2 f}{\partial y^2}(x_0,y_0):T_{y_0}Y\rightarrow T_{y_0}Y^*\cong T_{y_0}Y$ is a Fredholm operator;
\item[(ii)] for all $w\in\ker\left[\frac{\partial^2 f}{\partial y^2}(x_0,y_0)\right]\setminus\{0\}$, there exists $v\in T_{x_0}X$ such that $$\frac{\partial^2 f}{\partial x \partial y}(x_0,y_0)(v,w)\neq 0.$$
\end{itemize}

Let $\mathcal{U}_x=\{y\in Y:(x,y)\in\mathcal{U}\}$ and denote by $\Pi:X\times Y\rightarrow X$ the projection onto the first factor. Then the set of $x\in X$ such that the functional $$f_x:\mathcal{U}_x\owns y\longmapsto f(x,y)\in\R$$ is a Morse function is generic in the open subset $\Pi(\mathcal{U})\subset X$.
\end{proposition}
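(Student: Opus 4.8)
The plan is to follow the transversality scheme of White, reducing the genericity statement to the Sard--Smale theorem. First I would show that the set $\mathfrak{M}=\{(x,y)\in\mathcal{U}:\frac{\partial f}{\partial y}(x,y)=0\}$ is a $C^{k-1}$ embedded submanifold of $X\times Y$. This is where hypothesis (ii) enters: consider the map $F:\mathcal{U}\to TY^*$ given by $F(x,y)=\frac{\partial f}{\partial y}(x,y)$ (more precisely a section of the appropriate bundle, trivialized locally). Its partial derivative with respect to $y$ at a zero $(x_0,y_0)$ is the Hessian $\frac{\partial^2 f}{\partial y^2}(x_0,y_0)$, which by (i) is Fredholm, hence has closed image of finite codimension. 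Using (ii), for every nonzero $w$ in the cokernel of the Hessian (identified with $\ker\frac{\partial^2 f}{\partial y^2}(x_0,y_0)$ via self-adjointness) there is a direction $v\in T_{x_0}X$ moving $F$ out of the image of the Hessian; this shows $dF(x_0,y_0)$ is surjective with split kernel, so $0$ is a regular value of $F$ and $\mathfrak{M}=F^{-1}(0)$ is a submanifold.

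Next I would analyze the projection $\Pi|_{\mathfrak{M}}:\mathfrak{M}\to X$. At a point $(x_0,y_0)\in\mathfrak{M}$, the tangent space $T_{(x_0,y_0)}\mathfrak{M}=\ker dF(x_0,y_0)$ consists of pairs $(v,w)$ with $\frac{\partial^2 f}{\partial y^2}(x_0,y_0)w+\frac{\partial^2 f}{\partial x\partial y}(x_0,y_0)(v,\cdot)=0$. A short diagram chase shows that $d(\Pi|_{\mathfrak{M}})(x_0,y_0)$ has kernel isomorphic to $\ker\frac{\partial^2 f}{\partial y^2}(x_0,y_0)$ and cokernel isomorphic to $\operatorname{coker}\frac{\partial^2 f}{\partial y^2}(x_0,y_0)$; since the Hessian is Fredholm and self-adjoint, these have the same finite dimension, so $\Pi|_{\mathfrak{M}}$ is a nonlinear Fredholm map of index $0$. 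Moreover a point $(x_0,y_0)\in\mathfrak{M}$ is a critical point of $\Pi|_{\mathfrak{M}}$ precisely when $\ker\frac{\partial^2 f}{\partial y^2}(x_0,y_0)\neq\{0\}$, i.e. when $y_0$ is a degenerate critical point of $f_{x_0}$. Hence the regular values of $\Pi|_{\mathfrak{M}}$ are exactly the $x\in\Pi(\mathcal{U})$ such that $f_x$ has no degenerate critical point in $\mathcal{U}_x$, that is, such that $f_x$ is a Morse function.

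Finally, since $X$ is separable Banach and $Y$ separable Hilbert, $\mathfrak{M}$ is second countable, and $\Pi|_{\mathfrak{M}}$ is a $C^{k-1}$ Fredholm map with $k\geq 2$ large enough that the Sard--Smale theorem \cite{smale} applies (one may need $k-1\geq 1$, which holds, and in borderline index considerations the index-$0$ case is fine); therefore the set of regular values of $\Pi|_{\mathfrak{M}}$ is generic in $\Pi(\mathcal{U})$, being a countable intersection of open dense sets. Combined with the previous identification of regular values, this yields genericity of the set of $x$ for which $f_x$ is Morse, completing the proof. The main obstacle is the first step: verifying that hypothesis (ii), together with the Fredholm and self-adjointness properties of the Hessian, really does give surjectivity of $dF$ with a complemented kernel — i.e. correctly handling the functional-analytic bookkeeping of identifying cokernels with kernels and splitting the relevant subspaces in the Hilbert manifold $Y$.
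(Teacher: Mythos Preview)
Your proposal is correct and follows essentially the same approach as the paper, which only sketches the argument: condition (ii) yields transversality of $\tfrac{\partial f}{\partial y}$ to the zero section so that $\mathfrak{M}$ is a $C^{k-1}$ submanifold, $\Pi|_{\mathfrak{M}}$ is Fredholm of index zero with critical points exactly the degenerate critical points of the $f_x$, and Sard--Smale concludes. Your write-up simply fills in more of the functional-analytic details than the paper's brief sketch does.
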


\begin{remark}
Given $y_0\in Y$, since $x\mapsto \frac{\partial f}{\partial y}(x,y_0)$ takes values on the fixed Hilbert space $T_{y_0}Y^*$, the mixed derivative in condition (ii) is well defined without the use of a connection on $TY^*$. Also $\frac{\partial^2 f}{\partial y^2}(x_0,y_0)$ is well defined when $\frac{\partial f}{\partial y}(x_0,y_0)=0$, coinciding with the Hessian of $f(x_0,\cdot)$ at the critical point $y_0$.
\end{remark}

\subsection{Brief sketch of the proof}

Recall that a complete proof of such criterion can be found in \cite{biljavapic,chillingsworth}.

Let us briefly give the main lines of how the proof goes. Condition (ii) is a \emph{transversality condition}, more precisely it implies that the map $\tfrac{\partial f}{\partial y}:\mathcal{U}\rightarrow TY^*$ is transversal to the null section of the cotangent bundle $TY^*$. This guarantees that $\mathfrak{M}=\left\{(x,y)\in\mathcal{U}:\tfrac{\partial f}{\partial y}(x,y)=0\right\}$ is an embedded $C^{k-1}$ submanifold of $X\times Y$ and the restriction $\Pi|_\mathfrak{M}$ is a nonlinear $C^{k-1}$ Fredholm map of index zero. Moreover, its critical points are precisely the $(x,y)\in\mathfrak{M}$ such that $y$ is a degenerate critical point of the functional $f_x:\mathcal{U}_x\owns y\mapsto f(x,y)\in\R$. Hence applying the Sard--Smale theorem \cite[Theorem 1.3]{smale} one obtains genericity of parameters $x$ for which $f_x$ has only nondegenerate critical points.

We shall use this abstract criterion in the following set up. The Banach manifold $X$ will be a fixed $C^k$--Whitney type Banach space of tensor fields over $M$, and the Hilbert manifold $Y$ will be the manifold of curves of Sobolev class $H^1$ satisfying a \emph{general boundary condition} on $M$.  Typically, the open subset $\mathcal{U}\subset X\times Y$ will be taken of the form $\mathcal U=\mathcal U_0\times Y$, where $\mathcal U_0$ is an open subset of $X$ consisting of metric tensors. The functional $f$ will be a generalized energy functional. Hence critical points are pairs of metrics and geodesics. The Sard--Smale theorem applied to the projection on the first variable will imply genericity of metrics whose energy functional is Morse. We will give a more precise description of the intended use of this abstract genericity criterion in Section \ref{secgenericity}.

\section{General boundary conditions}\label{sec:gbc}

In this section we study general boundary conditions for the geodesic variational problem on $M$ for curves parametrized in $[0,1]$ with certain regularity. We shall define \emph{general boundary condition} on $M$, analyze the structure of the set of curves satisfying such a boundary condition, and discuss some of its important features, namely the absence of short geodesics under some further \emph{admissibility} assumptions.

A {\it fixed endpoint} boundary condition on $M$ is just a fixed pair of points $(p,q)\in M\times M$, and the correspondent restraint on a curve $\gamma$ is $\gamma(0)=p$ and $\gamma(1)=q$. This is the boundary condition on curves considered in \cite{biljavapic} to prove genericity of metrics without degenerate geodesics, with the assumption that $p\ne q$. Several attempts to generalize this condition are possible, for instance instead of fixing two points $p,q\in M$, fix two submanifolds $P,Q\subset M$, and allow $\gamma(0)\in P$ and $\gamma(1)\in Q$. The most comprehensive generalization is considering a submanifold $\p\subset M\times M$, with the restriction $(\gamma(0),\gamma(1))\in\p$ on the endpoints of curves $\gamma$. This makes arbitrary choices of boundary conditions possible.

\subsection{Nondegeneracy}

Before defining \emph{general boundary condition} and considering the appropriate space of curves satisfying such condition, it is necessary to go into a technical remark on the openness of nondegeneracy property of submanifolds.

Fix an index $\nu\in\{0,\dots,n\}$ and $\mathcal{E}$ a $C^k$--Whitney type Banach space of tensor fields over $M$. Let $\mathcal{A}_\nu\subset\mathcal{E}\cap\met_\nu^k(M)$ be an open subset of the intersection. For any $g\in\mathcal{A}_\nu$, the first step to analyze degeneracy of geodesics with such a boundary condition $\p$ is to induce a metric structure on the submanifold $i:\p\hookrightarrow M\times M$. It will be later clear from Remark \ref{whyg} that the natural choice is to consider the restriction of the ambient space metric $g\oplus (-g)$ to $\p$. Henceforth this \emph{product} metric corresponding to $g\in\mathcal A_\nu$ will be denoted $\overline{g}$.

\begin{proposition}\label{nondegenerateopen} Let $\p\subset M\times M$ be a \emph{compact} submanifold. Then the subset $$\mathcal{A}_{\nu,\p}=\{g\in\mathcal{A}_\nu: \p\subset (M\times M,\overline{g}) \mbox{ is nondegenerate}\}$$ is open in $\mathcal{A}_\nu$.
\end{proposition}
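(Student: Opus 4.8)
The plan is to show that the complement $\mathcal{A}_\nu\setminus\mathcal{A}_{\nu,\p}$ — the set of metrics $g$ for which $\p$ is \emph{degenerate} with respect to $\overline g=g\oplus(-g)$ — is closed in $\mathcal{A}_\nu$, equivalently that nondegeneracy of $\p$ is stable under sufficiently small perturbations of $g$ in the norm of $\mathcal{E}$. The key observation is that degeneracy of $\p$ at a point $x=(p,q)\in\p$ means that the restricted bilinear form $(i^*\overline g)_x$ on $T_x\p$ has nontrivial kernel, i.e. $\det\big((i^*\overline g)_x\big)=0$ once we pick (locally) a smooth frame of $T\p$. So I would first set up, for each $x\in\p$, a local trivialization of $T\p$ near $x$ and consider the function $g\mapsto \det\big((i^*\overline g)|_{T\p}\big)$ expressed in that frame; this is continuous in $g$ because, by property (ii) of a $C^k$--Whitney type Banach space, $\|\cdot\|_{\mathcal{E}}$--convergence forces $C^k$ (in particular $C^0$) convergence on the compact set $\p\subset M\times M$ (strictly, on a compact neighborhood of $\p$ in $M\times M$), and the product metric $\overline g$ and hence its restriction depend continuously (indeed linearly and continuously) on $g$ in the $C^0$ norm over that compact set.

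The second step handles the passage from pointwise to uniform control, and this is where compactness of $\p$ is essential. Given $g_0\in\mathcal{A}_{\nu,\p}$, nondegeneracy gives, for each $x\in\p$, a positive lower bound on $|\det((i^*\overline{g_0})_x)|$ in a suitable local frame; by a standard covering argument, using compactness of $\p$, one extracts a finite subcover and a single uniform constant $\delta>0$ such that $|\det((i^*\overline{g_0})_x)|\geq\delta$ for all $x\in\p$ in the respective local frames (after controlling the transition functions, which are smooth on the compact overlaps). Then, since $g\mapsto \det((i^*\overline g)_x)$ is continuous in $g$ uniformly in $x\in\p$ — again because $\mathcal{E}$--convergence yields uniform $C^0$ convergence over the compact $\p$ — there is an $\mathcal{E}$--neighborhood of $g_0$ on which $|\det((i^*\overline g)_x)|\geq\delta/2>0$ for all $x\in\p$. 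Hence $\p$ remains nondegenerate for all such $g$, proving openness.

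A slightly cleaner alternative, avoiding the choice of local frames, is to argue via the associated $\overline g$--self-adjoint operator: $\p$ is nondegenerate at $x$ iff the smallest (in absolute value) eigenvalue of the restriction of $\overline g(x)$ to $T_x\p$ — measured against the fixed auxiliary Riemannian metric $g_\mathrm{R}\oplus g_\mathrm{R}$ on $M\times M$ — is bounded away from zero; this eigenvalue function is continuous in $(g,x)$, and compactness of $\p$ again produces a uniform bound that persists under small perturbations. The \textbf{main obstacle} is not conceptual but bookkeeping: one must be careful that the continuity of $g\mapsto i^*\overline g$ is genuinely \emph{uniform} over $\p$, which is exactly what property (ii) of the $C^k$--Whitney type Banach space delivers on the compact set $\p$ — so the argument genuinely uses both the compactness hypothesis on $\p$ and the defining property of the tensor space $\mathcal{E}$, and the proof should make this dependence explicit.
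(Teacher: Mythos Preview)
Your argument is correct and complete: nondegeneracy is an open condition expressed by the nonvanishing of a determinant (or, equivalently, a lower bound on the smallest eigenvalue measured against $g_\mathrm{R}\oplus g_\mathrm{R}$), and compactness of $\p$ together with property (ii) of the $C^k$--Whitney type Banach space yields the required uniformity. The paper, however, takes a different route. Rather than bounding a determinant from below, it proves sequential closedness of the complement: given a sequence $g_\alpha\to g_\infty$ of degenerate metrics, it picks for each $\alpha$ a point $p_\alpha\in\p$ and a nontrivial subspace $V_\alpha\subset\ker(i^*\overline{g_\alpha})_{p_\alpha}$, then uses compactness of the Grassmannian bundle $\gr_r(\p)$ to extract a convergent subsequence $V_\alpha\to V_\infty$ and concludes $V_\infty\subset\ker(i^*\overline{g_\infty})_{p_\infty}$. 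Your approach is more elementary and quantitative---it produces an explicit $\mathcal{E}$--neighborhood on which $|\det(i^*\overline g)|\geq\delta/2$---at the cost of choosing local frames and managing the bookkeeping you yourself flag. The paper's approach avoids frames entirely by tracking the degenerate subspaces themselves, which is cleaner but invokes the (compact) Grassmannian bundle; it also does not give any quantitative control. Both arguments rest on the same two ingredients: compactness of $\p$ and the fact that $\|\cdot\|_{\mathcal E}$--convergence forces uniform $C^0$ convergence of $i^*\overline g$ over $\p$.
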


\begin{proof}
Suppose $\mathcal{A}_\nu$ nonempty, otherwise the statement is trivially verified. For each $g\in\mathcal{A}_\nu$, consider the product metric $\overline{g}$. Let $\{g_\alpha\}$ be a convergent sequence in $\mathcal{A}_\nu\setminus\mathcal{A}_{\nu,\p}$ and $\{\overline{g_\alpha}\}$ the correspondent sequence in $\met_n^k(M\times M)\setminus \met_n^k(M\times M,\p)$,\footnote{See \eqref{nondegmet}. Notice that the index of $\overline{g}=g\oplus (-g)$ is always equal to the dimension $n$ of $M$, with no dependence of $\nu$.} with $\lim \overline{g_\alpha}=\overline{g_\infty}$. Identifying at each $p\in \p$ the vector spaces $T_p \p^*\otimes T_p \p^*\cong\lin(T_p \p,T_p \p^*)$, one may consider the symmetric tensor $i^*\overline{g_\alpha}$ at each $p$ as a linear map $$(i^*\overline{g_\alpha})_p:T_p \p\longrightarrow T_p \p^*\cong T_p \p,$$ denoted with the same symbol. Since for all $\alpha$, $i^*\overline{g_\alpha}$ is a degenerate symmetric bilinear tensor on $\p$, there exists $p_\alpha\in\p$ and $V_\alpha\subset T_{p_\alpha} \p$, with $\dim V_\alpha\geq 1$, such that $V_\alpha\subset\ker (i^*\overline{g_\alpha})_{p_\alpha}$. Choosing $r$ to be the minimum of $\dim V_\alpha$, without loss of generality it is possible to assume that for all $\alpha$, $\dim V_\alpha=r\geq 1$.

Thus $\{V_\alpha\}$ is a sequence in the Grassmannian bundle $\gr_r(\p)$, which is compact, since $\p$ is compact. Up to subsequences, there exists $V_\infty\in\gr_r(\p)$ limit of the sequence $\{V_\alpha\}$. By continuity of this convergence, there exists a limit point $p_\infty\in \p$, and $V_\infty\subset\ker (i^*\overline{g_\infty})_{p_\infty}$. Therefore, as $\dim V_\infty=r\geq 1$, the limit metric tensor $\overline{g_\infty}$ is also in $\met_n^k(M\times M)\setminus \met_n^k(M\times M,\p)$, hence $g_\infty\in\mathcal{A}_\nu\setminus\mathcal{A}_{\nu,\p}$.
\end{proof}

\subsection{A few definitions}

Using the same notation from Proposition \ref{nondegenerateopen}, it is now possible to define the following.

\begin{definition}
A \emph{$\nu$--general boundary condition} on $M$ is a \emph{compact} submanifold $\p\subset M\times M$, such that $\mathcal{A}_{\nu,\p}$ is \emph{nonempty} (see Remark \ref{semiriemanniansubmanifold}). When the index $\nu$ is evident from the context, $\p$ will be simply called \emph{general boundary condition}.
\end{definition}

Henceforth, $\p$ will denote a general boundary condition on $M$.

Note that if $\nu=0$, then $\mathcal A_\nu=\mathcal A_{\nu,\p}$ is obviously nonempty for all submanifolds $\p\subset M\times M$. Compactness of $\p$ is a fundamental assumption, not only in order to prove Proposition \ref{nondegenerateopen}, but also because we shall use boundedness of $\p$ to get the desired conditions on limits of curves satisfying such general boundary condition. It is also crucial to consider only nondegenerate metrics, since we shall prove genericity of the set of metrics without degenerate geodesics in $\mathcal{A}_{\nu,\p}$. This is genuinely the natural set of metrics to be considered in this context. Moreover, the submanifold geometry of $\p$ determines the behavior of variational fields correspondent to curves with these conditions, and for instance Lemma \ref{jacobinotparallel} would not hold in case $\p$ was degenerate (see Remark \ref{whyg}).

Let us now investigate the adequate setting for curves on $M$ with endpoints in $\p$. As usual, $H^1([0,1],M)$ denotes the set of all curves of Sobolev class $H^1$ in $M$. It is a well--known fact that $H^1([0,1],M)$ has a canonical Hilbert manifold structure (see Lang \cite{lang} or Palais \cite{palais}) modeled on the separable Hilbert space $H^1([0,1],\R^n)$. In order to verify that the subset \begin{equation}\label{opm}\op(M)=\{\gamma\in H^1([0,1],M):(\gamma(0),\gamma(1))\in\p\},\end{equation} is a Hilbert manifold, consider the {\it double} evaluation map $\ev_{01}:H^1([0,1],M)\rightarrow M\times M$, given by $\ev_{01}(\gamma)=(\gamma(0),\gamma(1))$. It is then easy to see that $\ev_{01}$ is a submersion, hence $\op(M)=\ev_{01}^{-1}(\p)$ is a submanifold of $H^1([0,1],M)$. Furthermore, the tangent space $T_\gamma\op(M)$ can be identified with the Hilbertable space of all sections $v$ of Sobolev class $H^1$ of the pull--back bundle $\gamma^*TM$ such that $(v(0),v(1))\in T_{(\gamma(0),\gamma(1))}\p$.

Moreover, for each $\gamma\in\op(M)$, the fixed complete Riemannian metric $g_\mathrm{R}$ on $M$ induces a Riemannian structure on the fibers of the pull--back bundle $\gamma^*TM$. Hence $\op(M)$ can be endowed with a Riemann--Hilbert structure using the inner product in $T_\gamma\op(M)$ given by \begin{equation}\label{riemhilbop}\langle v,w\rangle =\int_0^1 g_\mathrm{R}(\D^{g_\mathrm{R}}v,\D^{g_\mathrm{R}}w)\;\mathrm{d}t.\end{equation}

\begin{example}\label{exgbc}
The fixed endpoints condition $\p=\{p\}\times\{q\}$ is \emph{trivially}\footnote{Note that $\p$ is automatically nondegenerate, since the tangent space to $\p$ is trivial and every possible ambient metric induces the identically null metric on $\p$. This also holds if $p=q$.} a general boundary condition. As expected, the tangent space $T_\gamma\op(M)$ is formed by Sobolev class $H^1$ sections $v$ of $\gamma^*TM$ such that $v(0)=0$ and $v(1)=0$. Similarly, if $P$ and $Q$ are compact submanifolds, then $\p=P\times Q$ is a general boundary condition, unless it fails to admit semi-Riemannian metrics of the appropriate index (see Remark \ref{semiriemanniansubmanifold}). The curves $\gamma\in\op(M)$ satisfy $\gamma(0)\in P$ and $\gamma(1)\in Q$, and the condition on the sections $v$ of $\gamma^*TM$ that form the tangent space is $v(0)\in T_{\gamma(0)}P$ and $v(1)\in T_{\gamma(1)}Q$. Note that $\p$ would still be a general boundary condition if one submanifold was taken as a point, i.e., $Q=\{q\}$.
\end{example}

Note that the \emph{transpose} of a general boundary condition $\p$, defined by \begin{equation}\label{transpose}\p^t=\{(p,q)\in M\times M:(q,p)\in\p\},\end{equation} is also a general boundary condition, and the spaces $\op(M)$ and $\Omega_{\p^t}(M)$ can be canonically identified by reparametrizing curves using the backwards parameterization. Hence solutions of the geodesic variational problems with boundary conditions $\p$ and $\p^t$ are also obviously identified. Due to such symmetry, every result stated for some general boundary condition $\p$ is also automatically valid for its transpose $\p^t$.

\begin{definition}
Fix $g\in\mathcal A_{\nu,\p}$. A $g$--geodesic $\gamma\in\op(M)$ will be called a \emph{$(g,\p)$--geodesic} if it satisfies $$(\dot\gamma(0),\dot\gamma(1))\in T_{(\gamma(0),\gamma(1))}\p^\perp,$$ where $^\perp$ denotes orthogonality relatively to $\overline g$. It will be seen in Section \ref{secgenericity} that this is equivalent to $(g,\gamma)$ being a critical point of a generalized energy functional.
\end{definition}

\subsection{Periodic geodesics}\label{sec:pg}

An interesting example of boundary condition for geo\-de\-sics is given by the dia\-go\-nal\footnote{Here $\Delta\subset M\times M$ is the diagonal of the product manifold $M\times M$, however in the sequel we will be somewhat sloppy about the use of the symbol $\Delta$. It will denote the diagonal not only of $M\times M$, but also of any product space, for instance $\Delta$'s own tangent space, which is the diagonal $\Delta\subset T_xM\oplus T_xM$. There is no ambiguity, since it will always be clear from the context which diagonal is being considered.} $$\Delta=\{(p,p):p\in M\};$$ critical points of the $g$--energy functional in the space of curves with endpoints on $\Delta$ are \emph{periodic} $g$--geodesics. Note however that $\Delta$, more generally any submanifold $\mathcal P$ somewhere tangent to $\Delta$, is always degenerate for a metric of the form $\overline g=g\oplus(-g)$. Thus, these are not general boundary conditions. Indeed, if $\p$ is tangent to $\Delta$ at $(p,p)$, the tangent space $T_{(p,p)}\p$ is a subspace of the diagonal $\Delta\subset T_pM\oplus T_pM$, hence $\overline{g}$ is identically null in this space. In particular, $\Delta\subset M\times M$ itself is \emph{not} a general boundary condition.

Nevertheless, Biliotti, Javaloyes and Piccione \cite{biljavapic2} recently managed to use equivariant variational
genericity to prove the \emph{semi--Riemannian bumpy metric theorem}\footnote{This theorem asserts that the set of \emph{bumpy} metrics, that is, metrics without degenerate periodic geodesics, is generic. The Riemannian version of this result was formulated by Abraham \cite{abraham} and proved by Anosov \cite{anosov}.}, which corresponds to our main result, Theorem \ref{bigone}, in case $\p=\Delta$. Our technique though does not apply to this case, and we \emph{use} the semi--Riemannian bumpy metric theorem to prove our genericity statement if $\p\cap\Delta\neq\varnothing$.

\subsection{Admissibility}

For the main result, it is necessary to have a lower bound on the Riemannian length of nonconstant $(g,\p)$--geodesics. To this aim we introduce the following.

\begin{definition}\label{def:admgbc}
A $\nu$--general boundary condition $\p$ will be said to be \emph{admissible} if for every $g_0\in\mathcal A_{\nu,\p}$, there exists an open neighborhood $\mathcal V$ of $g_0$ in $\mathcal A_{\nu,\p}$ and $a>0$, such that for all $g\in\mathcal V$, and all $(g,\p)$--geodesics $\gamma$, $L_\mathrm R(\gamma)\geq a$.
\end{definition}

It is easy to see that this definition does not depend on the choice of the Rie\-man\-nian metric $g_\mathrm R$. Some elementary classes of admissible general boundary conditions are worth mentioning. Firstly, if the general boundary condition $\p$ satisfies $\p\cap\Delta=\varnothing$, then it is admissible. In this case, it is enough to set $$a=\min_{(p,q)\in\p} d_\mathrm R(p,q),$$ where $d_\mathrm R$ denotes the $g_\mathrm R$--distance in $M$.

Another class of admissible general boundary conditions is given by $\p=P\times \{q\}$, where $P\subset M$ is a compact submanifold, and $q\in M$, as in Example \ref{exgbc}. There are two possible situations; namely if $q\notin P$, then $\p\cap\Delta =\varnothing$, hence it is also in the previous class. However, if $q\in P$, the proof of \cite[Lemma 3.6]{biljavapic2} can be used to verify that $\p$ is admissible. In fact, although stated only for periodic geodesics, its proof is automatically valid considering nonconstant geodesic loops instead of periodic geodesics, hence gives the required condition on $\p$. Note that the same is true for the transpose $\p^t=\{q\}\times P$.

We shall now establish the admissibility of a larger class of general boundary conditions that intersect $\Delta$, using again a transversality approach. To this aim, we give an estimation of the decrease of the difference between the normalized tangent field to a geodesic at its endpoints, in terms of its length.

\begin{lemma}\label{short}
Let $U\subset\R^n$ be an open subset and $g_0\in\met_\nu^k(U)$. Then for all compact subsets $K\subset U$ there exists a positive number $c>0$ and an open neighborhood $\mathcal O$ of $g_0$ in the weak Whitney $C^1$--topology, such that for all $g\in\mathcal O$ and all nonconstant $g$--geodesic $\gamma:[a,b]\rightarrow U$ with $\gamma([a,b])\subset K$, the following inequality holds \begin{equation} \left\| \frac{\dot\gamma(b)}{\|\dot\gamma(b)\|}-\frac{\dot\gamma(a)}{\|\dot\gamma(a)\|}\right\|\leq c\int_a^b \|\dot\gamma(t)\|\;\mathrm{d}t,\end{equation} where $\|\cdot\|$ is the Euclidean norm.
\end{lemma}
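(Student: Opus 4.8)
The plan is to work in coordinates on $U\subset\R^n$ and exploit the geodesic equation $\ddot\gamma^i + \Gamma^i_{jk}(g,\gamma)\,\dot\gamma^j\dot\gamma^k = 0$, where the Christoffel symbols $\Gamma^i_{jk}$ depend on $g$ through $g$ and its first derivatives, hence continuously in the weak Whitney $C^1$--topology. First I would fix the compact set $K\subset U$ and choose a slightly larger compact set $K'$ with $K\subset \operatorname{int}K'\subset K'\subset U$. Since $g_0$ is nondegenerate on $K'$, there is a $C^1$--neighborhood $\mathcal O$ of $g_0$ and a constant $\Lambda>0$ such that for all $g\in\mathcal O$ the operator norm of $\Gamma^i_{jk}(g,x)$ is bounded by $\Lambda$ for all $x\in K'$; shrinking $\mathcal O$ if necessary I also get a uniform bound on $\|g(x)\|$ and $\|g(x)^{-1}\|$ over $x\in K'$, so that the $g$--norm $\|\cdot\|_g$ and the Euclidean norm $\|\cdot\|$ are uniformly comparable: $\lambda^{-1}\|v\|\le \|v\|_g \le \lambda\|v\|$ for some $\lambda>1$ independent of $g\in\mathcal O$ and $x\in K'$.

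The key computation is an estimate for the variation of the \emph{Euclidean-}normalized tangent field. Write $e(t)=\dot\gamma(t)/\|\dot\gamma(t)\|$. Since $\gamma$ is a nonconstant $g$--geodesic, $\|\dot\gamma(t)\|_g$ is constant (affine parametrization), but $\|\dot\gamma(t)\|$ need not be; still, both are comparable to the same constant up to the factor $\lambda$. Differentiating $e(t)$ and using the geodesic equation to substitute for $\ddot\gamma$, one finds
\[
\dot e(t) = \frac{\ddot\gamma(t)}{\|\dot\gamma(t)\|} - \frac{\langle\dot\gamma(t),\ddot\gamma(t)\rangle}{\|\dot\gamma(t)\|^3}\,\dot\gamma(t),
\]
and each term on the right is bounded in Euclidean norm by $C\,\|\dot\gamma(t)\|$, where $C$ depends only on $\Lambda$ and $\lambda$ (the $\|\dot\gamma(t)\|^2$ from the quadratic term in $\ddot\gamma$ cancels one power of $\|\dot\gamma(t)\|$ from the denominators). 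Therefore
\[
\left\|\frac{\dot\gamma(b)}{\|\dot\gamma(b)\|} - \frac{\dot\gamma(a)}{\|\dot\gamma(a)\|}\right\|
= \left\|\int_a^b \dot e(t)\,\mathrm dt\right\|
\le \int_a^b \|\dot e(t)\|\,\mathrm dt
\le c\int_a^b \|\dot\gamma(t)\|\,\mathrm dt,
\]
with $c = 2C$, say, which is the claimed inequality. Here I use crucially that $\gamma([a,b])\subset K$ so that all the coordinate-dependent bounds apply along the whole curve.

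The one point requiring a little care — and the main (mild) obstacle — is making the constant $C$ genuinely uniform over $g\in\mathcal O$ and independent of the particular geodesic: this is where nonconstancy of $\gamma$ (so that $\|\dot\gamma(t)\|\neq 0$ and $e(t)$ is well defined) and the affine parametrization (so that $\|\dot\gamma(t)\|_g$, and hence, by the comparability, $\|\dot\gamma(t)\|$, stays within a fixed ratio of its value at any other time) both get used; without affine parametrization the ratio $\|\dot\gamma(b)\|/\|\dot\gamma(a)\|$ could be arbitrary and the bound on the second term of $\dot e$ would fail. Once the uniform bound $\Lambda$ on the Christoffel symbols over the slightly enlarged compact set $K'$ is in hand, the rest is the elementary differentiation and triangle-inequality estimate above. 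Note the neighborhood $\mathcal O$ can be taken in the weak Whitney $C^1$--topology precisely because only $g$ and its first derivatives enter $\Gamma^i_{jk}$.
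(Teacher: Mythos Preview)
Your proposal is correct and follows essentially the same route as the paper: bound the Christoffel symbols uniformly on the compact set by continuity of $g\mapsto\Gamma^g$ in the $C^1$--topology, differentiate the Euclidean-normalized tangent $e(t)=\dot\gamma/\|\dot\gamma\|$, plug in the geodesic equation, and integrate. One simplification worth noting: the comparability constant $\lambda$ between $\|\cdot\|_g$ and $\|\cdot\|$, and the appeal to affine parametrization, are unnecessary---since $\|\ddot\gamma\|\le\Lambda\|\dot\gamma\|^2$ holds pointwise from the geodesic equation, both terms in your formula for $\dot e(t)$ are bounded by $\Lambda\|\dot\gamma(t)\|$ directly, giving $c=2\Lambda$ with no reference to $g$-norms or to the ratio $\|\dot\gamma(b)\|/\|\dot\gamma(a)\|$.
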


\begin{proof}
Given $g\in\met_\nu^k(U)$, denote by $\Gamma^g$ the Christoffel tensor of $g$ relatively to the Euclidean metric on $U$. Thus, for all $x\in U$, $\Gamma^g(x):\R^n\times\R^n\rightarrow\R^n$ is a symmetric bilinear map depending continuously on $x$, and if $\gamma$ is a $g$--geodesic, $\ddot\gamma=\Gamma^g(\gamma)(\dot\gamma,\dot\gamma)$, where $\ddot\gamma$ denotes the ordinary second derivative of $\gamma$ in $\R^n$. This association $g\mapsto\Gamma^g$ is clearly continuous when $\met_\nu^k(U)$ is endowed with the weak Whitney $C^1$--topology and the space of $\Gamma^g$'s is endowed with the weak Whitney $C^0$--topology. If $K\subset U$ is a given compact subset, set $\kappa=\max_{x\in K} \|\Gamma^{g_0}(x)\|+1$ and define $$\mathcal O=\{g\in\met_\nu^k(U):\|\Gamma^g(x)\| <\kappa, \; \forall  x\in K\},$$ which is obviously an open neighborhood of $g_0$ in the weak Whitney $C^1$--topology.

Let us show that such $\mathcal O$ satisfies the thesis, with $c=2\kappa$. Indeed, if $g\in\mathcal O$ and $\gamma$ is a nonconstant $g$--geodesic with image lying in $K$, then at each time $t\in [a,b]$, \begin{multline*}
\left\|\frac{d}{dt}\frac{\dot\gamma}{\|\dot\gamma\|}\right\| =\left\|\frac{\ddot\gamma}{\|\dot\gamma\|}-\frac{\dot\gamma\langle\dot\gamma,\ddot\gamma\rangle}{\|\dot\gamma\|^3}\right\| =\left\|-\frac{\Gamma^g(\gamma)(\dot\gamma,\dot\gamma)}{\|\dot\gamma\|}+\frac{\langle\dot\gamma,\Gamma^g(\gamma)(\dot\gamma,\dot\gamma)\rangle}{\|\dot\gamma\|^3}\dot\gamma\right\| \leq \\ \leq \frac{\|\Gamma^g(\gamma)\|\|\dot\gamma\|^2}{\|\dot\gamma\|} + \frac{\|\Gamma^g(\gamma)\|\|\dot\gamma\|^4}{\|\dot\gamma\|^3}\leq 2\kappa\|\dot\gamma\|. \end{multline*} Integrating the above inequality in $[a,b]$, it follows that $$\left\| \frac{\dot\gamma(b)}{\|\dot\gamma(b)\|}-\frac{\dot\gamma(a)}{\|\dot\gamma(a)\|}\right\|\leq\left\|\int_a^b \frac{d}{dt}\frac{\dot\gamma}{\|\dot\gamma\|}\;\mathrm{d}t\right\|\leq \int_a^b \left\|\frac{d}{dt}\frac{\dot\gamma}{\|\dot\gamma\|}\right\|\;\mathrm{d}t\leq 2\kappa\int_a^b\|\dot\gamma(t)\|\;\mathrm{d}t.\qedhere$$
\end{proof}

\begin{proposition}\label{admissibility}
If a $\nu$--general boundary condition $\p$ intersects $\Delta$ transversally\footnote{That is, $T_{(x,x)}\p +\Delta=T_xM\oplus T_xM$, for all $x\in\p\cap\Delta$.}, then $\p$ is admissible.
\end{proposition}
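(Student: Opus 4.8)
The plan is to argue by contradiction, using Lemma \ref{short} to show that an accumulating sequence of short $(g,\p)$--geodesics would force the normalized tangent vectors at the two endpoints to converge to the same limit, and then to use the transversality hypothesis to derive a contradiction with the orthogonality condition defining $(g,\p)$--geodesics. More precisely, suppose $\p$ is not admissible. Then there is some $g_0\in\mathcal A_{\nu,\p}$ and a sequence $g_\alpha\to g_0$ in $\mathcal A_{\nu,\p}$, together with $(g_\alpha,\p)$--geodesics $\gamma_\alpha:[0,1]\to M$ with $L_\mathrm R(\gamma_\alpha)\to 0$. Since $\p$ is compact, the pairs $(\gamma_\alpha(0),\gamma_\alpha(1))\in\p$ subconverge to some $(p,q)\in\p$; because $L_\mathrm R(\gamma_\alpha)\to 0$ and $\gamma_\alpha$ is continuous, we have $d_\mathrm R(\gamma_\alpha(0),\gamma_\alpha(1))\to 0$, so $q=p$, i.e. $(p,p)\in\p\cap\Delta$. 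Thus the hypothesis $\p\pitchfork\Delta$ applies at $(p,p)$. Moreover the entire image of $\gamma_\alpha$ lies in a small $g_\mathrm R$--ball around $p$ for $\alpha$ large.

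Next I would localize: choose a chart $\varphi:\mathcal W\to U\subset\R^n$ around $p$, and a compact neighborhood $K\subset U$ of $\varphi(p)$ containing $\varphi(\gamma_\alpha([0,1]))$ for all large $\alpha$. Transporting the metrics $g_\alpha$ to $U$ via $\varphi$, convergence in the appropriate Whitney $C^k$--norm on $\mathcal A_\nu$ implies $C^1$--convergence on $K$ (by property (ii) of a $C^k$--Whitney type Banach space), so all $g_\alpha$ eventually lie in the neighborhood $\mathcal O$ of $g_0$ furnished by Lemma \ref{short}. Applying that lemma to $\gamma_\alpha$ (nonconstant since $\gamma_\alpha$ is a geodesic with endpoints orthogonal to a nondegenerate submanifold — a constant would be degenerate, or one observes that if $\gamma_\alpha$ were constant it would not be counted), we obtain, in the Euclidean norm of the chart,
\[
\left\|\frac{\dot\gamma_\alpha(1)}{\|\dot\gamma_\alpha(1)\|}-\frac{\dot\gamma_\alpha(0)}{\|\dot\gamma_\alpha(0)\|}\right\|\leq c\int_0^1\|\dot\gamma_\alpha(t)\|\,\mathrm dt\xrightarrow[\alpha\to\infty]{}0,
\]
because the Euclidean length of $\gamma_\alpha$ in the chart is comparable to $L_\mathrm R(\gamma_\alpha)\to0$ on the compact set $K$. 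Passing to a further subsequence, the unit vectors $u_\alpha:=\dot\gamma_\alpha(0)/\|\dot\gamma_\alpha(0)\|$ converge to some unit vector $u\in T_pM$ (identified with $\R^n$ via $d\varphi$), and the estimate above forces $\dot\gamma_\alpha(1)/\|\dot\gamma_\alpha(1)\|\to u$ as well.

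Now I extract the contradiction from transversality. Since $(p,p)\in\p\cap\Delta$ and $\p\pitchfork\Delta$, we have $T_{(p,p)}\p+\Delta=T_pM\oplus T_pM$; taking $\overline{g_0}$--orthogonal complements and using that $\Delta$ is $n$--dimensional with $\overline{g_0}=g_0\oplus(-g_0)$, one computes $\Delta^\perp=\{(w,-w):w\in T_pM\}$ (the anti-diagonal), so the transversality identity dualizes to $T_{(p,p)}\p^\perp\cap\Delta^\perp=\{0\}$; equivalently, the only vector of the form $(w,-w)$ in $T_{(p,p)}\p^\perp$ is $0$. On the other hand, the $(g_\alpha,\p)$--geodesic condition says $(\dot\gamma_\alpha(0),\dot\gamma_\alpha(1))\in T_{(\gamma_\alpha(0),\gamma_\alpha(1))}\p^\perp$; normalizing by $\|\dot\gamma_\alpha(0)\|$ and using $\|\dot\gamma_\alpha(1)\|/\|\dot\gamma_\alpha(0)\|\to1$ (the norms are comparable since $\|\dot\gamma_\alpha(t)\|$ is $\overline{g_0}$--controlled and the Euclidean directions converge; more simply, affine parametrization makes $\|\dot\gamma_\alpha(t)\|_{g_\mathrm R}$ vary boundedly, and both endpoint speeds are comparable to $L_\mathrm R(\gamma_\alpha)$), we get in the limit $(u,-u)\in T_{(p,p)}\p^\perp$ — here the minus sign enters because of the $-g$ factor in $\overline g$: a vector $(v_0,v_1)$ is $\overline{g_0}$--orthogonal to all of $T_{(p,p)}\p\supset$ the diagonal part, and the limit tangent lies in the anti-diagonal as just computed. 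Since $u$ is a unit vector, $(u,-u)\neq0$, contradicting $T_{(p,p)}\p^\perp\cap\Delta^\perp=\{0\}$. Hence $\p$ is admissible.

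The main obstacle I anticipate is the bookkeeping around the two endpoint speeds $\|\dot\gamma_\alpha(0)\|$ and $\|\dot\gamma_\alpha(1)\|$: one must verify that after normalization the limiting tangent pair genuinely lands in the anti-diagonal (not merely that its two components are parallel), which is exactly where the interplay between the $\overline g=g\oplus(-g)$ signature and transversality to $\Delta$ becomes essential — this is the step that fails for $\p=\Delta$ itself and must be handled carefully, presumably by working with the $\overline{g_0}$--unit vectors rather than Euclidean ones once the Euclidean convergence is in hand, and invoking $L_\mathrm R(\gamma_\alpha)\to0$ to control the ratio of speeds via the geodesic equation on the fixed compact set $K$.
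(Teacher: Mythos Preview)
Your overall strategy matches the paper's proof almost exactly: contradiction, compactness of $\p$ to get a limit point on $\Delta$, localization in a chart, Lemma~\ref{short} to make the normalized endpoint velocities converge to a common unit vector, and then a transversality/duality argument to force that vector to be zero. However, there is a genuine sign error in the final step that breaks your argument as written.

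With $\overline{g_0}=g_0\oplus(-g_0)$ one has $\overline{g_0}\big((v,v),(w,w)\big)=g_0(v,w)-g_0(v,w)=0$ for all $v,w$, so $\Delta\subset\Delta^\perp$; by dimension, $\Delta^\perp=\Delta$, \emph{not} the anti-diagonal. Likewise, the $(g_\alpha,\p)$--geodesic condition is literally $(\dot\gamma_\alpha(0),\dot\gamma_\alpha(1))\in T\p^{\perp_\alpha}$; after dividing by a common scalar and passing to the limit one obtains $(u,u)\in T_{(p,p)}\p^\perp$, with no minus sign. Your two sign flips do not cancel: you would need $(u,-u)\in T\p^\perp$, which you have not shown, and your identification $\Delta^\perp=\{(w,-w)\}$ is false. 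The correct conclusion is
\[
(u,u)\in T_{(p,p)}\p^\perp\cap\Delta=T_{(p,p)}\p^\perp\cap\Delta^\perp=(T_{(p,p)}\p+\Delta)^\perp=\{0\},
\]
whence $u=0$, the desired contradiction.

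Separately, the step $\|\dot\gamma_\alpha(1)\|/\|\dot\gamma_\alpha(0)\|\to 1$ deserves an actual estimate; your heuristic ``both endpoint speeds are comparable to $L_\mathrm R(\gamma_\alpha)$'' does not by itself give a ratio tending to $1$. The paper handles this by bounding $\big|\tfrac{d}{dt}\log\|\dot\gamma_\alpha\|\big|\le\kappa\|\dot\gamma_\alpha\|$ on $K$ (same $\kappa$ as in Lemma~\ref{short}) and integrating, which yields $\big|\log(\|\dot\gamma_\alpha(1)\|/\|\dot\gamma_\alpha(0)\|)\big|\le\kappa\int_0^1\|\dot\gamma_\alpha\|\,\mathrm dt\to 0$. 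Once this is in place and the sign is fixed, your proof coincides with the paper's.
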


\begin{proof}
We proceed by contradiction. Since the weak Whitney $C^1$--topology is first coun\-ta\-ble, assuming $\p$ is not admissible implies that there exists a sequence $\{g_\alpha\}$ in $\mathcal A_{\nu,\p}$ converging to some $g_0\in\mathcal A_{\nu,\p}$ in the weak Whitney $C^1$--topology and a sequence $\gamma_\alpha\in\op(M)$ of nonconstant $(g_\alpha,\p)$--geodesics such that $\lim L_\mathrm R(\gamma_\alpha)=0$. Since $\p$ is compact, up to taking subsequences, one can assume that there exists $x\in M$ such that $(x,x)\in\p$ and both $\lim \gamma_\alpha(0)=x$, $\lim \gamma_\alpha(1)=x$.

By taking a local chart of $M$ around $x$, we can assume that we are in open subset $U\subset \R^n$. Let $K\subset U$ be any compact neighborhood of $x$, so that there exists $\alpha_0$ such that for $\alpha\geq\alpha_0$, $\gamma_\alpha([0,1])\subset K$. Since $L_\mathrm R(\gamma_\alpha)$ tends to zero, then also the Euclidean length of $\gamma_\alpha$ tends to zero. From Lemma \ref{short}, it follows that, $$\lim \left(\frac{\dot\gamma_\alpha(0)}{\|\dot\gamma_\alpha(0)\|} -\frac{\dot\gamma_\alpha(1)}{\|\dot\gamma_\alpha(1)\|}\right)=0,$$ and up to taking subsequences, we can assume that both $\frac{\dot\gamma_\alpha(0)}{\|\dot\gamma_\alpha(0)\|}$ and $\frac{\dot\gamma_\alpha(1)}{\|\dot\gamma_\alpha(1)\|}$ converge to unitary vectors. However, from the above limit, both tend to the \emph{same unitary vector $v\in\R^n$}.

We claim that $(v,v)\in T_{(x,x)}\p^\perp$, where $^\perp$ denotes orthogonality with respect to $\overline{g_0}$, and that this concludes the proof. Indeed, suppose the claim to be true. Then $$(v,v)\in T_{(x,x)}\p^\perp \cap\Delta=(T_{(x,x)}\p + \Delta^\perp)^\perp.$$ It is easy to see that $\Delta^\perp=\Delta$; and since we assumed $T_{(x,x)}\p +\Delta=T_xM\oplus T_xM$, its orthogonal complement with respect to $\overline{g_0}$ is trivial. Hence $v=0$, which gives the desired contradiction.

It remains to prove the above claim that $(v,v)\in T_{(x,x)}\p^\perp$. Consider $\mathcal O$ the open neighborhood\footnote{Using the identification above given by a local chart $U$ of $M$ around $x$, since the restriction map $\mathcal A_{\nu,\p}\owns g\mapsto g|_U\in\met_\nu^k(U)$ is continuous in the considered topologies, the open neighborhood of $g_0$ in $\mathcal A_{\nu,\p}$ can be taken as the preimage of $\mathcal O$ by this restriction map.} of $g_0\in\met_\nu^k(U)$ in the weak Whitney $C^1$--topology given by Lemma \ref{short} with the choices above. Then, for all $g\in\mathcal O$ it is possible to give the following estimation for any $g$--geodesic $\gamma$ with image lying in $K$, $$\left|\frac{d}{dt} \log \|\dot\gamma(t)\|\right| = \frac{\left|\langle\dot\gamma,\ddot\gamma\rangle\right|}{\|\dot\gamma\|^2} = \frac{\left|\langle\dot\gamma,\Gamma^{g}(\gamma)(\dot\gamma,\dot\gamma)\rangle\right|}{\|\dot\gamma\|^2}\leq \frac{\|\Gamma^g(\gamma)\|\|\dot\gamma\|^3}{\|\dot\gamma\|^2}\leq \kappa\|\dot\gamma\|,$$ where $\kappa=\max_{x\in K} \|\Gamma^{g_0}(x)\| +1$ is again the same as in Lemma \ref{short}. Hence, integrating the above inequality in $[0,1]$, it follows that $$\left|\log\frac{\|\dot\gamma(1)\|}{\|\dot\gamma(0)\|}\right|=\left|\int_0^1 \frac{d}{dt}\log\|\dot\gamma\|\;\mathrm{d}t\right|\leq\int_0^1 \left|\frac{d}{dt}\log\|\dot\gamma\|\right|\;\mathrm{d}t\leq\kappa\int_0^1\|\dot\gamma\|\;\mathrm{d}t.$$

Applying this estimation to the $(g_\alpha,\p)$--geodesics $\{\gamma_\alpha\}$, since its Euclidean length tend to zero, one concludes that $$\lim \frac{\|\dot{\gamma_\alpha}(1)\|}{\|\dot{\gamma_\alpha}(0)\|}=1.$$ Moreover, for each $\alpha$, $$\left(\frac{\dot{\gamma_\alpha}(0)}{\|\dot{\gamma_\alpha}(1)\|},\frac{\dot{\gamma_\alpha}(1)}{\|\dot{\gamma_\alpha}(1)\|}\right)\in T_{(\gamma_\alpha(0),\gamma_\alpha(1))}\p^{\perp_\alpha},$$ where $^{\perp_\alpha}$ denotes orthogonality with respect to $\overline{g_\alpha}$, and $\lim \frac{\dot{\gamma_\alpha}(1)}{\|\dot{\gamma_\alpha}(1)\|}=v$.

From $\lim \frac{\dot{\gamma_\alpha}(0)}{\|\dot{\gamma_\alpha}(0)\|}=v$ and $\lim \frac{\|\dot{\gamma_\alpha}(1)\|}{\|\dot{\gamma_\alpha}(0)\|}=1$, it follows that also $\lim \frac{\dot{\gamma_\alpha}(0)}{\|\dot{\gamma_\alpha}(1)\|}=v$. Since $\p$ is compact, this proves the claim that $(v,v)\in T_{(x,x)}\p^\perp$.
\end{proof}

\begin{remark}
Since admissibility of $\p$ can be characterized by its transversality to $\Delta$, it follows that admissibility is a generic property of general boundary conditions.
\end{remark}

To end this section, we analyze admissibility of some general boundary conditions given in Example \ref{exgbc}.

\begin{example}\label{adexgbc}
The fixed endpoints boundary condition $\p=\{p\}\times\{q\}$ is clearly admissible, even if $p=q$. Indeed, it falls in the class of boundary conditions of the form $\p=P\times\{q\}$, where $P\subset M$ is a compact submanifold, explored in the beginning of this subsection. Substituting $q$ for a compact submanifold $Q\subset M$ gives $\p=P\times Q$, as in Example \ref{exgbc}. This is also clearly an admissible general boundary condition if $P\cap Q=\varnothing$. If $P\cap Q\neq\varnothing$, it is easy to see that $\p$ is transversal to $\Delta$ if and only if $P$ and $Q$ are transversal submanifolds of $M$.
\end{example}

\section{Genericity of metrics without degenerate geodesics}\label{secgenericity}

In this section we prove our main result, the genericity of semi--Riemannian metrics without degenerate geodesics satisfying an admissible general boundary condition. It is an immediate generalization of the genericity result in \cite{biljavapic} that corresponds to $\{p\}\times\{q\}$ to any admissible general boundary condition $\p$, even if $\p\cap\Delta\neq\varnothing$.

More precisely, consider again $M$ a $n$--dimensional smooth manifold, an index $\nu\in\{0,\dots,n\}$ and  $\mathcal{A}_\nu\subset\mathcal{E}\cap\met_\nu^k(M)$ a nonempty open subset of $\mathcal{E}$, which is a fixed $C^k$--Whitney type Banach space of tensor fields over $M$. Consider also $\p$ an admissible $\nu$--general boundary condition, $g_\mathrm{R}$ the fixed complete Riemannian metric on $M$ and the Hilbert--Riemann structure it induces on $\op(M)$ (see \eqref{opm}), given by (\ref{riemhilbop}). We shall prove that the set of semi--Riemannian metrics on $M$ of fixed index $\nu$ such that all $(g,\p)$--geodesics\footnote{Recall that by \emph{geodesic} we mean \emph{affinely parametrized} geodesic.} are nondegenerate is generic in $\mathcal{A}_{\nu,\p}$ (see Proposition \ref{nondegenerateopen}).

For this we shall use the abstract genericity criterion given in Proposition \ref{abstractgenericity} with the following geodesic setup. The Banach manifold $X$ will be taken as the Banach space $\mathcal{E}$ and the Hilbert manifold $Y$ as $\op(M)$. The open subset of $X\times Y$ is $\mathcal{U}=\mathcal{A}_{\nu,\p}\times\op(M)$, domain of the generalized energy functional \begin{equation}\label{efunct} f:\mathcal{U}\owns (g,\gamma)\longmapsto\tfrac{1}{2}\int_0^1 g(\dot{\gamma},\dot{\gamma}) \;\mathrm{d}t\in\R, \end{equation} which is a $C^k$ functional. More precisely, it is smooth with respect to the first variable $g\in\mathcal{A}_{\nu,\p}$ and $C^k$ with respect to the second variable $\gamma$. Furthermore, with this formulation, $(g_0,\gamma_0)$ is a critical point of $f$ if and only if $\gamma_0$ is a $(g_0,\p)$--geodesic, i.e., $\gamma_0\in\op(M)$ is a $g_0$--geodesic and $$(\dot{\gamma_0}(0),\dot{\gamma_0}(1))\in T_{(\gamma_0(0),\gamma_0(1))}\p^\perp,$$ where $^\perp$ denotes orthogonality with respect to $\overline{g_0}$.

Hence, critical points of the projection $\Pi:\mathcal U\rightarrow \mathcal{A}_{\nu,\p}$ correspond to the set of $(g_0,\gamma_0)$ such that $\gamma_0$ is a degenerate\footnote{That is, $\frac{\partial^2 f}{\partial \gamma^2}(g_0,\gamma_0)$ is not injective. If $\p=\{p\}\times\{q\}$ is a fixed endpoints condition, this means that $p$ and $q$ are conjugate along $\gamma_0$. For a general $\p$, we shall characterize the elements of the index form kernel as \emph{$\p$--Jacobi fields} and give further geometric interpretation in Subsection \ref{bigsec}.} $(g_0,\p)$--geodesic. Thus applying Proposition \ref{abstractgenericity} we shall conclude that the set of $g\in\mathcal{A}_{\nu,\p}$ such that the $g$--energy functional is Morse is generic in $\Pi(\mathcal U)=\mathcal{A}_{\nu,\p}$.

\subsection{Derivatives of the energy functional}\label{derivativesoff}

In order to verify the hypothesis of the abstract genericity criterion, we first compute the index form of the generalized energy functional and its kernel, verifying condition (i); and we also calculate the mixed derivative correspondent to condition (ii).

An easy computation gives the following formula for the index form \begin{multline}\label{indexform}
\frac{\partial^2 f}{\partial \gamma^2}(g_0,\gamma_0)(v,w) = \int_0^1 g_0(\D^{g_0} v,\D^{g_0} w)-g_0(R^{g_0}(\dot{\gamma_0},v)w,\dot{\gamma_0})\;\mathrm{d}t \\ -\s^\p_{(\dot{\gamma_0}(0),\dot{\gamma_0}(1))}\Big((v(0),v(1)),(w(0),w(1))\Big), \end{multline} where $\s^\p_\eta$ is the second fundamental form of $\p$ with normal $\eta\in T\p^\perp$, with respect to the ambient metric $\overline{g_0}$.

\begin{lemma}\label{fredholmness}
The index form $\frac{\partial^2 f}{\partial \gamma^2}(g_0,\gamma_0)$ is a {\rm Fredholm} symmetric bilinear form on $T_{\gamma_0}\op(M)$, i.e., it is represented by a self--adjoint Fredholm operator of this Hilbert space.
\end{lemma}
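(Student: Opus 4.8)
The plan is to decompose the index form in \eqref{indexform} into a ``principal part'' that is coercive (hence invertible) plus a ``lower order part'' that is compact, so that their sum is a self--adjoint Fredholm operator. First I would fix the auxiliary Riemannian metric $g_\mathrm{R}$ and use the inner product \eqref{riemhilbop} to represent $\frac{\partial^2 f}{\partial \gamma^2}(g_0,\gamma_0)$ by a bounded self--adjoint operator $T$ on the Hilbert space $H=T_{\gamma_0}\op(M)$. Self--adjointness is immediate from the symmetry of the bilinear form, so the real content is the Fredholm property, for which it suffices to write $T=A+C$ with $A$ an isomorphism and $C$ compact.

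For the principal part, note that the term $\int_0^1 g_0(\D^{g_0}v,\D^{g_0}w)\,\mathrm dt$ differs from the model inner product $\int_0^1 g_\mathrm{R}(\D^{g_\mathrm R}v,\D^{g_\mathrm R}w)\,\mathrm dt$ by substituting $\D^{g_\mathrm R}$ for $\D^{g_0}$ (a zeroth--order modification, since the Christoffel tensor $\chr=\nabla^{g_0}-\nabla^{g_\mathrm R}$ contributes $\|v\|_\infty$--type terms controllable by the $H^1$ norm, in fact by a compact embedding argument) and by replacing the positive--definite pairing $g_\mathrm R(\cdot,\cdot)$ with the possibly indefinite $g_0(\cdot,\cdot)$ on the fibers. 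The indefiniteness is handled by the standard device: since $g_0$ is a nondegenerate bilinear form on each fiber $T_{\gamma_0(t)}M$, it is represented with respect to $g_\mathrm R$ by an invertible symmetric operator $P(t)$, and $v\mapsto \int_0^1 g_0(\D^{g_\mathrm R}v,\D^{g_\mathrm R}v)\,\mathrm dt$ corresponds to a bounded invertible operator on $H$ (its inverse is bounded because $P(t)$ and $P(t)^{-1}$ are uniformly bounded on the compact image $\gamma_0([0,1])$). This gives the invertible summand $A$.

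The remaining terms are the curvature term $-\int_0^1 g_0(R^{g_0}(\dot\gamma_0,v)w,\dot\gamma_0)\,\mathrm dt$, the cross terms produced by trading $\D^{g_0}$ for $\D^{g_\mathrm R}$, and the boundary term $-\s^\p_{(\dot\gamma_0(0),\dot\gamma_0(1))}\big((v(0),v(1)),(w(0),w(1))\big)$. Each of these depends only on the $C^0$ data of $v$ and $w$ (values, or values together with at most one derivative paired against a fixed field), so each defines a bilinear form that is continuous for the $C^0\times H^1$ or $C^0\times C^0$ topology; composing with the compact Sobolev embedding $H^1([0,1])\hookrightarrow C^0([0,1])$ (and the compact evaluation maps $v\mapsto v(0),v(1)$, which factor through this embedding) shows that the corresponding operator $C$ is compact. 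Hence $T=A+C$ is Fredholm and self--adjoint, as claimed.

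The main obstacle I anticipate is being careful with the term $\int_0^1 g_0(\D^{g_0}v,\D^{g_0}w)\,\mathrm dt$: one must verify that the difference between this and $\int_0^1 g_0(\D^{g_\mathrm R}v,\D^{g_\mathrm R}w)\,\mathrm dt$ is genuinely a \emph{compact} perturbation. Writing $\D^{g_0}v=\D^{g_\mathrm R}v+\chr(\dot\gamma_0,v)$, the cross terms involve products of one $H^1$ factor with one $C^0$ factor and one $C^0$ factor $\chr(\dot\gamma_0,\cdot)$; the term $\int_0^1 g_0(\chr(\dot\gamma_0,v),\chr(\dot\gamma_0,w))\,\mathrm dt$ is purely $C^0\times C^0$. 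All of these are compact via the embedding $H^1\hookrightarrow C^0$, but this must be spelled out rather than asserted. Once that point is settled the decomposition $T=A+C$ and the conclusion are routine.
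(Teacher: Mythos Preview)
Your proposal is correct and follows essentially the same approach as the paper: both write the index form as an invertible principal part (coming from the fiberwise representation of $g_0$ with respect to $g_\mathrm R$ by an invertible symmetric endomorphism) plus a compact remainder, with compactness of the curvature term, the Christoffel cross--terms, and the boundary second--fundamental--form term all obtained via the compact embedding $H^1\hookrightarrow C^0$. The only cosmetic difference is that the paper takes as its invertible piece the pointwise multiplication operator $\Phi(v)(t)=A_tv(t)$ (where $g_0=g_\mathrm R(A_t\cdot,\cdot)$), whose invertibility is manifest, rather than the operator you associate to $\int_0^1 g_0(\D^{g_\mathrm R}v,\D^{g_\mathrm R}w)\,\mathrm dt$; the two differ by further compact terms (from $A_t'$), so either choice works.
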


\begin{proof}
For each $t\in [0,1]$, denote by $A_t$ the automorphism of $T_{\gamma(t)}M$ that represents $g_0$ in terms of the fixed Riemannian metric $g_\mathrm{R}$, that is, such that $g_0=g_\mathrm{R}(A_t\cdot,\cdot)$. Then the map $\Phi:T_{\gamma_0}\op(M)\rightarrow T_{\gamma_0}\op(M)$ that
carries $v$ to $\tilde v$, where  $\tilde{v}(t)=A_tv(t)$, is an isomorphism.

We shall prove that the index form \eqref{indexform}, is a compact perturbation of $\Phi$, hence Fredholm. Indeed, the difference $D(v,w)=\frac{\partial^2 f}{\partial \gamma^2}(g_0,\gamma_0)(v,w)-\langle\Phi v,w\rangle$ is given by
\begin{multline*}D(v,w) = \int_0^1 \Big[-g_\mathrm R(A'v,\D^\mathrm{R}w)+g_\mathrm{R}(A\D^\mathrm{R}v,\chr^\mathrm{R}w)+g_\mathrm{R}(A\chr^\mathrm{R}v,\D^\mathrm{R}w) \\  +g_\mathrm{R}(A\chr^\mathrm{R}v,\chr^\mathrm{R}w)+g_\mathrm{R}(AR(v),w)\Big]\;\mathrm{d}t \\  -\s^\p_{(\dot{\gamma_0}(0),\dot{\gamma_0}(1))}\Big((v(0),v(1)),(w(0),w(1))\Big), \end{multline*} where $\chr^\mathrm{R}=\D^{g_0}-\D^\mathrm{R}$ is the Christoffel tensor of $\nabla^{g_0}$ relatively to $\nabla^\mathrm{R}$, $R(v)=R^{g_0}(\dot{\gamma_0},v)\dot{\gamma_0}$ and $A'$ is the covariant derivative\footnote{$A$ can be thought as a $C^k$ section of $\gamma_0^*(TM^*\otimes TM)$, and the connection $\nabla^\mathrm{R}$ induces a canonical connection on this bundle.} of $A$.

Note that each term of the integral above is a bilinear form in $T_{\gamma_0}\op(M)\times T_{\gamma_0}\op(M)$ that does not contain more than one derivative of its arguments. Hence each term is continuous in one of its arguments, and continuous in the $C^0$--topology in the other\footnote{Using the inclusion $H^1\hookrightarrow C^0$ it is possible to induce a \emph{$C^0$--topology} in $T_{\gamma_0}\op(M)$.}; and since the inclusion $H^1\hookrightarrow C^0$ is compact, each of these bilinear forms is represented by a compact operator of $T_{\gamma_0}\op(M)$. Furthermore, the last term of the expression above for $D$ is also represented by a compact operator of $T_{\gamma_0}\op(M)$, since it is the image by an evaluation map with values on a finite dimensional vector space of a $C^0$--continuous bilinear form in $T_{\gamma_0}\op(M)$. Therefore $D$ is represented by a compact operator of $T_{\gamma_0}\op(M)$.
\end{proof}

Moreover, the kernel of the index form $\frac{\partial^2 f}{\partial \gamma^2}(g_0,\gamma_0)$ is the space of all Jacobi fields $J\in T_{\gamma_0}\op(M)$, such that \begin{equation}\label{pcampodijacobi}(\D^{g_0} J(0),\D^{g_0} J(1))+\s^\p_{(\dot{\gamma_0}(0),\dot{\gamma_0}(1))}\Big(J(0),J(1)\Big)\in T_{(\gamma_0(0),\gamma_0(1))}\p^\perp. \end{equation} The elements of this space will be called {\it $\p$--Jacobi fields}.

\begin{example}\label{exjac}
According to expected, in the cases of admissible general boundary conditions given in Example \ref{adexgbc}, $\p=\{p\}\times\{q\}$ and $\p=P\times Q$, the $\p$--Jacobi fields are Jacobi fields along $\gamma_0$ that, respectively, vanish at the endpoints, and $J(0)\in T_{\gamma_0(0)}P$ and $J(1)\in T_{\gamma_0(1)}Q$. Geometrically, existence of a nontrivial $\p$--Jacobi field in the previous cases can be interpreted as follows. In the first case, it simply means that $p$ and $q$ are conjugate along $\gamma_0$; and in the second, if $Q=\{q\}$ is a point, it means that $q$ is \emph{focal} to $P$. For the geometrical interpretation of conjugacy for general products
$\p=P\times Q$ see for instance \cite{PicTauJMP}.
\end{example}

\begin{remark}\label{tangentaintpjacobi}
Consider $\gamma\in\op(M)$ a $(g,\p)$--geodesic. Although the tangent field $\dot\gamma$ is a Jacobi field, it is \emph{never} a \emph{$\p$--Jacobi field}. This is immediate from the fact that $\dot\gamma$ is $\overline{g}$--orthogonal to $\p$ at $(\gamma(0),\gamma(1))$, and that all $\p$--Jacobi fields along $\gamma$ must be tangent to $\p$ at this point. Since $\overline{g}$ does not degenerate on $\p$, it follows $\dot\gamma\notin T_\gamma\op(M)$ and hence $\dot\gamma$ cannot be a $\p$--Jacobi field.

Note that this observation includes the case of geodesics loops, which may be $(g,\p)$--geodesics if $\p\cap\Delta\neq\varnothing$. In addition, it also covers the possibility $\p=\{p\}\times\{q\}$, even if $p=q$. In such case, the tangent space $T_{(p,q)}\p$ is trivial, hence all $\p$--Jacobi fields $J$ along $\gamma$ have to satisfy $J(0)=0$ and $J(1)=0$. Therefore, $\dot\gamma$ is not a $\p$--Jacobi field once more.
\end{remark}

\begin{remark}\label{degeneracynotions}
Suppose $\p\cap\Delta\neq\varnothing$ and let $\gamma$ be a periodic $g$--geodesic that is also a $(g,\p)$--geodesic. As a consequence of Remark \ref{tangentaintpjacobi}, the notions of degeneracy of $\gamma$ differ when it is considered as a \emph{periodic geodesic} and as a \emph{$(g,\p)$--geodesic}. More precisely, the tangent field $\dot\gamma$ is always a Jacobi field along $\gamma$, therefore $\gamma$ would always be a degenerate critical point of the $g$--energy functional. Such degeneracy is caused by the obvious action of the circle $S^1$ on periodic curves, by right composition. To treat this special case, one is forced to use an equivariant definition of degeneracy. Namely, a periodic geodesic $\gamma$ is said to be \emph{degenerate as a periodic geodesic} if it admits a periodic Jacobi field that is not a constant multiple of $\dot\gamma$.

Since the tangent field $\dot\gamma$ is not a $\p$--Jacobi field along $\gamma$, it follows that if $\gamma$ is nondegenerate as a periodic geodesic, then it is also nondegenerate as a $(g,\p)$--geodesic. However, the converse is not true, since $\gamma$ may admit a Jacobi field which is not a constant multiple of $\dot\gamma$, neither a $\p$--Jacobi field.
\end{remark}

Let $\gamma$ be a $(g,\p)$--geodesic. Not only the tangent field $\dot\gamma$ is not a $\p$--Jacobi field (Remark \ref{tangentaintpjacobi}), but also $\p$--Jacobi fields along $\gamma$ are only parallel to $\dot\gamma$ at a finite number of points. Such claim is a consequence of Lemma \ref{parallelfinite} combined with the following result.

\begin{lemma}\label{jacobinotparallel}
Let $\gamma:[a,b]\rightarrow M$ be a $g$--geodesic. If $J$ is a nontrivial $\p$--Jacobi field along $\gamma$, then it is not everywhere parallel to $\dot{\gamma}$.
\end{lemma}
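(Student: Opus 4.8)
The plan is to argue by contradiction. Suppose some nontrivial $\p$--Jacobi field $J$ along $\gamma$ is everywhere parallel to $\dot\gamma$. As recalled just before Lemma~\ref{parallelfinite}, the only Jacobi fields everywhere parallel to $\dot\gamma$ are its affine multiples, so after rescaling the parameter interval to $[0,1]$ we may write $J=(c_1+c_2t)\,\dot\gamma$ with $(c_1,c_2)\neq(0,0)$. Since $\p$--Jacobi fields occur only along $(g,\p)$--geodesics, $\gamma$ is such a geodesic, and being nonconstant it has $\dot\gamma(t)\neq 0$ for all $t$. I will use two facts encoded in $J$ lying in the kernel of the index form: that $J\in T_\gamma\op(M)$ gives $(J(0),J(1))=(c_1\dot\gamma(0),(c_1+c_2)\dot\gamma(1))\in T_{(\gamma(0),\gamma(1))}\p$, and that $\gamma$ is a $(g,\p)$--geodesic gives $(\dot\gamma(0),\dot\gamma(1))\in T_{(\gamma(0),\gamma(1))}\p^\perp$, the orthogonal complement being taken with respect to $\overline g=g\oplus(-g)$.

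The first step is to pair these two vectors. Since $T\p$ and $T\p^\perp$ are $\overline g$--orthogonal, the $\overline g$--product of $(J(0),J(1))$ and $(\dot\gamma(0),\dot\gamma(1))$ vanishes; on the other hand, computing it directly from $\overline g=g\oplus(-g)$ and using that $g(\dot\gamma,\dot\gamma)$ is constant along a geodesic, this product equals $-c_2\,g(\dot\gamma,\dot\gamma)$, so $c_2\,g(\dot\gamma,\dot\gamma)=0$. If $g(\dot\gamma,\dot\gamma)\neq 0$ this forces $c_2=0$, whence $J=c_1\dot\gamma$ with $c_1\neq 0$; but then $c_1(\dot\gamma(0),\dot\gamma(1))\in T\p\cap T\p^\perp=\{0\}$ because $\overline g$ is nondegenerate on $\p$, so $\dot\gamma(0)=0$, contradicting nonconstancy of $\gamma$. (This is the argument behind Remark~\ref{tangentaintpjacobi} applied to the whole affine family, and it is exactly where nondegeneracy of $\p$ enters, cf.\ Remark~\ref{whyg}.)

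There remains the lightlike case $g(\dot\gamma,\dot\gamma)=0$, in which the pairing above gives nothing because the vectors parallel to $\dot\gamma$ are $\overline g$--isotropic. Here I would invoke the full kernel condition \eqref{pcampodijacobi}: as $\D^g J=c_2\dot\gamma$, the vector $(\D^g J(0),\D^g J(1))=c_2(\dot\gamma(0),\dot\gamma(1))$ already belongs to $T\p^\perp$, so \eqref{pcampodijacobi} forces $\s^\p_{(\dot\gamma(0),\dot\gamma(1))}\big(J(0),J(1)\big)\in T\p^\perp$; since this vector lies in $T\p$, nondegeneracy of $\overline g$ on $\p$ makes it vanish, i.e.\ $(J(0),J(1))\in\ker\s^\p_{(\dot\gamma(0),\dot\gamma(1))}$. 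It then remains to exclude $c_2\neq 0$; for this one exploits that $(J(0),J(1))$ and $(\dot\gamma(0),\dot\gamma(1))$ span a totally $\overline g$--isotropic $2$--plane of $T_{(\gamma(0),\gamma(1))}(M\times M)$ meeting both $T\p$ and $T\p^\perp$ nontrivially, on which the shape operator $\s^\p_{(\dot\gamma(0),\dot\gamma(1))}$ is annihilated, together with the compactness (and again nondegeneracy) of $\p$, to reach a contradiction.

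I expect the lightlike case to be the main obstacle: when $g(\dot\gamma,\dot\gamma)\neq 0$ the statement follows from one orthogonality computation plus nondegeneracy of $\p$, essentially already contained in Remark~\ref{tangentaintpjacobi}, whereas when $\gamma$ is lightlike the affine family $(c_1+c_2t)\dot\gamma$ is $\overline g$--isotropic, orthogonality is uninformative, and the contradiction has to be wrung out of the second fundamental form condition \eqref{pcampodijacobi} and the compactness/nondegeneracy of the boundary condition $\p$.
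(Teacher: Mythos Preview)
Your argument in the non-lightlike case is correct and in fact more careful than the paper's. The paper proceeds differently: it splits on whether $\p$ is a single point. When $\p=\{p\}\times\{q\}$ the tangent space is trivial, so $J(a)=J(b)=0$, and an affine multiple of $\dot\gamma$ vanishing at both endpoints is identically zero. When $\p$ is not a point the paper simply asserts in one line that $(J(a),J(b))\in T\p$ together with $(\dot\gamma(a),\dot\gamma(b))\in T\p^\perp$ prevents $J(a)$ and $J(b)$ from being respectively parallel to $\dot\gamma(a)$ and $\dot\gamma(b)$.

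Your lightlike case, however, has a genuine gap, and the sketch you give cannot be completed. Take $M=\R^2$ with the flat Lorentz metric $g=dx^2-dy^2$, the null geodesic $\gamma(t)=(t,t)$, and let $\p\subset M\times M$ be a small flat $2$-disk through $(\gamma(0),\gamma(1))=(0,0,1,1)$ with tangent space spanned by $v_1=(1,-1,0,-2)$ and $v_2=(0,0,1,1)$ in coordinates $(x_1,y_1,x_2,y_2)$. The Gram matrix of $\overline g=g\oplus(-g)$ on $T\p$ is $\bigl(\begin{smallmatrix}4&-2\\-2&0\end{smallmatrix}\bigr)$, with determinant $-4$, so $\p$ is nondegenerate; and $(\dot\gamma(0),\dot\gamma(1))=(1,1,1,1)$ is $\overline g$-orthogonal to both $v_1$ and $v_2$, so $\gamma$ is a $(g,\p)$-geodesic. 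Now $J(t)=t\,\dot\gamma(t)$ is a nontrivial Jacobi field with $(J(0),J(1))=(0,0,1,1)=v_2\in T\p$ and $(\D^g J(0),\D^g J(1))=(1,1,1,1)\in T\p^\perp$; since $\p$ is an affine plane in flat space, $\s^\p\equiv 0$, so condition \eqref{pcampodijacobi} is satisfied and $J$ is a $\p$-Jacobi field everywhere parallel to $\dot\gamma$. All the ingredients you invoke---the totally $\overline g$-isotropic $2$-plane spanned by $(J(0),J(1))$ and $(\dot\gamma(0),\dot\gamma(1))$, the vanishing of the shape operator on $(J(0),J(1))$, compactness and nondegeneracy of $\p$---are present here, yet no contradiction arises. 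So the lightlike case is a real obstruction, not a technicality to be waved through; note that the paper's one-line argument for the non-point case does not address it either.
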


\begin{proof}
Firstly, let us consider the trivial case when $\p$ is not a point. Since $J$ is a $\p$--Jacobi field, $(J(a),J(b))\in T_{(\gamma(a),\gamma(b))}\p$. Hence $J(a)$ and $J(b)$ are not respectively parallel to $\dot{\gamma}(a)$ and $\dot{\gamma}(b)$, because $(\dot{\gamma}(a),\dot{\gamma}(b))\in T_{(\gamma(a),\gamma(b))}\p^\perp$.

If $\p=\{p\}\times\{q\}$, the argument is modified as follows. In this case, suppose that there exists $\lambda:[a,b]\rightarrow M$ such that $J(t)=\lambda(t)\dot{\gamma}(t)$. Since $J$ is the solution of the Jacobi equation, $\lambda$ must be an affine function, that is, $\lambda(t)=c_1+c_2t$ for some $c_1,c_2\in\R$. Moreover, $(J(a),J(b))$ is tangent and orthogonal to $\p$ at $(\gamma(a),\gamma(b))$, thus $\lambda(a)=\lambda(b)=0$. This implies that $J$ is the trivial solution.
\end{proof}

\begin{remark}\label{whyg}
This is the reason to choose the product metric $\overline{g}=g\oplus (-g)$ instead of any other. Note that if the metric in $\p$ was different, it would be possible that the tangent field $\dot\gamma$ was a $\p$--Jacobi field. Furthermore, notice that the nondegeneracy of $\p$ with respect to this $\overline{g}$ is essential in the proof.
\end{remark}

To compute the second mixed derivative $\frac{\partial^2 f}{\partial g\partial \gamma}$ of the energy functional \eqref{efunct}, it is convenient to use Schwartz lemma. Since the domain $\mathcal{A}_{\nu,\p}\times\op(M)$ is the product of an open subset $\mathcal{A}_{\nu,\p}$ of a Banach space $\mathcal{E}$ and a Hilbert manifold $\op(M)$, the first partial derivative can be thought as $\frac{\partial f}{\partial g}:\mathcal{A}_{\nu,\p}\times\op(M)\rightarrow\mathcal{E}^*$.

Fix $g_0\in\mathcal{A}_{\nu,\p}$. Deriving $\frac{\partial f}{\partial g}(g_0,\cdot)$, one obtains \begin{equation}\label{yx}\frac{\partial}{\partial \gamma}\frac{\partial f}{\partial g}(g_0,\gamma_0):T_{\gamma_0}\op(M)\longrightarrow\mathcal{E}^*,\end{equation} which may also be seen as a bilinear form on $T_{\gamma_0}\op(M)\times\mathcal{E}$. If instead of deriving $f$ first in $g$, one derives first in $\gamma$ and then in $g$, the result is \begin{equation}\label{xy}\frac{\partial}{\partial g}\frac{\partial f}{\partial \gamma}(g_0,\gamma_0):\mathcal{E}\rightarrow T_{\gamma_0}\op(M)^*, \end{equation} which is a bilinear form on $\mathcal{E}\times T_{\gamma_0}\op(M)$. Using local charts and Schwartz lemma, it follows that these maps are transpose to each other, that is, for all $(v,w)\in\mathcal{E}\times T_{\gamma_0}\op(M)$, $\frac{\partial^2 f}{\partial g\partial\gamma}(g_0,\gamma_0)(v,w)=\frac{\partial^2 f}{\partial\gamma\partial g}(g_0,\gamma_0)(w,v)$.

We are interested in computing \eqref{xy}, however it turns out to be easier to compute \eqref{yx}, so we shall use the observation above. Since $f$ is linear in the first variable, for all $h\in\mathcal{E}$, $$\label{firstderivative}\frac{\partial f}{\partial g}(g_0,\gamma)h=\tfrac{1}{2}\int_0^1 h(\dot{\gamma},\dot{\gamma})\;\mathrm{d}t.$$ Fix any\footnote{It is easy to see that the following construction does not depend on the choice of $\nabla$.} symmetric connection $\nabla$ on $M$. Deriving $\frac{\partial f}{\partial g}(g_0,\cdot)$, one obtains for each $(h,v)\in\mathcal{E}\times T_{\gamma_0}\op(M)$, \begin{equation}\label{mixedderivative} \frac{\partial^2 f}{\partial\gamma\partial g}(g_0,\gamma_0)(v,h)=\int_0^1 h(\dot{\gamma}_0,\D v)+\tfrac{1}{2}\nabla h(v,\dot{\gamma_0},\dot{\gamma_0})\;\mathrm{d}t, \end{equation} where $\D$ is the covariant derivative of vector fields along $\gamma_0$ induced by $\nabla$. This gives a final formula for the mixed derivative which is crucial to verify condition (ii) of Proposition \ref{abstractgenericity}.

\subsection{\emph{Strongly degenerate} geodesics}\label{sec:stdeg}

Before proving our main theorem, let us introduce a class of geodesics that will play a special role in the final arguments for periodic geodesics.

\begin{definition}\label{defstdeg}
Let $\gamma:[0,1]\rightarrow M$ be a $g$--geodesic. Then $\gamma$ is said to be \emph{strongly degenerate} if
there exists an integer $k\ge2$ such that:
\begin{itemize}
\item[(a)] $\gamma\left(t+\tfrac{i}{k}\right)=\gamma(t)$, for all $i\in\{0,\dots,k-1\}$ and $t\in \left[0,\tfrac{1}{k}\right[$;
\item[(b)] $\gamma$ admits a Jacobi field $J\ne0$, such that $\sum\limits_{i=0}^{k-1} J\left(t+\tfrac{i}{k}\right)=0$, for all $t\in \left[0,\tfrac{1}{k}\right[$.
\end{itemize}
\end{definition}

Observe that if $\gamma$ is strongly degenerate, then it is automatically a periodic geodesic with period $\tfrac{1}{k}$. Moreover, our next result asserts it is also degenerate as such.

\begin{proposition}\label{stronglydegenerate}
If $\gamma\in\op(M)$ is a strongly degenerate $(g,\p)$--geodesic, then it is also degenerate as a periodic geodesic (see Remark \ref{degeneracynotions}). That is, $\gamma$ admits a nontrivial periodic Jacobi field $J$ that is not a constant multiple of $\dot\gamma$.
\end{proposition}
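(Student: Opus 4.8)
The plan is to show that the Jacobi field $J$ furnished by Definition \ref{defstdeg}(b) is, already, a nontrivial periodic Jacobi field of period $1$ along $\gamma$ that is not a constant multiple of $\dot\gamma$; by Remark \ref{degeneracynotions} this is exactly what it means for $\gamma$ to be degenerate as a periodic geodesic.

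Set $\omega=\tfrac1k$. By (a), the closed curve $\gamma|_{[0,1]}$ is obtained by running $k$ times around the geodesic loop $\gamma|_{[0,\omega]}$; in particular $\gamma(t+i\omega)=\gamma(t)$, and hence (differentiating, and using uniqueness for the geodesic equation) $\dot\gamma(t+i\omega)=\dot\gamma(t)$, whenever both sides make sense. Note $\gamma$ must be nonconstant: otherwise every Jacobi field would be affine, $J(t)=A+tB$, and the sum in (b) would equal $k(A+tB)+\tfrac{k-1}{2}B$, forcing $A=B=0$, against $J\ne0$. Since $\gamma|_{[i\omega,(i+1)\omega]}$ revisits exactly the same points of $M$ with the same velocities as $\gamma|_{[0,\omega]}$, and the Jacobi equation along a geodesic depends only on $g$ and $\dot\gamma$ at the base point, each translate $t\mapsto J(t+i\omega)$, $t\in[0,\omega]$, is a Jacobi field along $\gamma|_{[0,\omega]}$; hence so is $W(t):=\sum_{i=0}^{k-1}J(t+i\omega)$.

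By (b), $W$ vanishes on $[0,\omega[$, and a Jacobi field vanishing on an interval is trivial (as in the proof of Lemma \ref{parallelfinite}), so $W\equiv0$ on $[0,\omega]$; in particular $W(\omega)=0$ and $\D^gW(\omega)=0$. Now telescoping gives $W(\omega)=\sum_{i=1}^{k}J(i\omega)=W(0)-J(0)+J(k\omega)=J(1)-J(0)$, and similarly $\D^gW(\omega)=\D^gJ(1)-\D^gJ(0)$. Therefore $J(0)=J(1)$ and $\D^gJ(0)=\D^gJ(1)$, i.e.\ $J$ is a periodic Jacobi field of period $1$ along the closed geodesic $\gamma$. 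It remains to observe that $J$ is not a constant multiple of $\dot\gamma$: if $J=c\,\dot\gamma$ with $c\in\R$, then since $\dot\gamma$ is $\omega$-periodic the sum in (b) equals $ck\,\dot\gamma$, which is nowhere zero (the parallel field $\dot\gamma$ never vanishes along the nonconstant geodesic $\gamma$) unless $c=0$, which would mean $J\equiv0$, contradicting $J\ne0$. This exhibits the required nontrivial periodic Jacobi field, so $\gamma$ is degenerate as a periodic geodesic.

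I do not expect a serious obstacle: the argument is a one-line telescoping identity combined with uniqueness for the Jacobi equation, which is a linear ODE with at least continuous coefficients since $k\ge2$. The two points that deserve care are that the translates $J(\cdot+i\omega)$ genuinely solve the same Jacobi equation along $\gamma|_{[0,\omega]}$ --- this uses that $\gamma$ retraces the same points with matching velocity, and not any periodicity of the fixed metric $g$ --- and the final step, where the hypotheses ``$J\ne0$'' and ``the $k$-fold translate-sum of $J$ vanishes'' must be played off against each other to rule out $J$ being proportional to $\dot\gamma$; this last point is really the substance of the proposition.
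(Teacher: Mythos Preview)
Your proof is correct and follows essentially the same approach as the paper: both arguments take the Jacobi field $J$ from Definition~\ref{defstdeg}(b), use the telescoping identity $W(\omega)-W(0)=J(1)-J(0)$ (and its derivative) together with $W\equiv0$ to conclude that $J$ is $1$-periodic, and then observe that the sum condition in (b) prevents $J$ from being a (constant) multiple of $\dot\gamma$. Your write-up is a bit more careful about justifying that $W$ vanishes at the endpoint $\omega$ (via uniqueness for the Jacobi ODE) and that $\gamma$ is nonconstant, but the substance is identical to the paper's proof.
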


\begin{proof}
Take $J$ a Jacobi field as in (b). Then $J$ is not everywhere parallel to $\dot\gamma$, otherwise it would follow that $\dot\gamma=0$. Comparing condition (b) at $t=0$ and $t=\tfrac{1}{k}$, one obtains that $$J(0)-J(1)=\sum_{i=0}^{k-1} J\left(\tfrac{i}{k}\right)-\sum_{i=0}^{k-1} J\left(\tfrac{1}{k}+\tfrac{i}{k}\right)=0.$$ Moreover, $V(t)=\sum_{i=0}^{k-1} J\left(t+\tfrac{i}{k}\right)$ is the identically null vector field, hence $\D V(t)=0$ for all $t\in [0,1]$. Thus, it follows that $$\D J(0)-\D J(1)=\D V(0)-\D V(\tfrac{1}{k})=0.$$ This concludes the proof, since the same $J$ that degenerates $\gamma$ as a $(g,\p)$--geodesic is also periodic and is not a constant multiple of $\dot\gamma$.
\end{proof}

\begin{remark}\label{tagentisstdegenerate}
Let $\gamma:[0,1]\rightarrow M$ be a periodic $g$--geodesic with period $\tfrac{1}{k}$, $k\ge 2$. Suppose that $\gamma$ admits a nontrivial Jacobi field $J$ such that 
\begin{equation}\label{eq:sommaJ}
\sum\limits_{i=0}^{k} J\left(t+\tfrac{i}{k}\right)=\lambda(t)\dot{\gamma}(t),
\end{equation}
for all $t\in \left[0,\tfrac{1}{k}\right[$. Then, since
the left-hand side in the above equality is a Jacobi field, then $\lambda$ must be an affine function.
However, since $J$ is periodic, then $\lambda$ must be constant, for otherwise the right-hand side of \eqref{eq:sommaJ} would be unbounded as $t\mapsto\pm\infty$. Hence, by adding a suitable multiple of $\dot\gamma$ to $J$, it is possible to obtain a Jacobi field that satisfies conditions (a) and (b) of Definition \ref{defstdeg}. Therefore, in this case $\gamma$ is strongly degenerate.
\end{remark}

\subsection{Main result}\label{bigsec}

We are now ready to prove our main genericity result. More precisely, we prove that the set of semi--Riemannian metrics without degenerate geodesics satisfying an admissible general boundary condition $\p$ is generic in the Whitney $C^k$--topology among metrics for which $\p$ is nondegenerate. It extends the previous genericity statement in \cite{biljavapic} to the general boundary conditions setup described in the last section, which in particular allows one to consider geodesic loops at a point $p$.

Geometrically, \cite{biljavapic} asserts that generically two distinct points are not conjugate. As in Examples \ref{adexgbc} and \ref{exjac}, $\p=P\times\{q\}$ is an admissible general boundary condition, where $P$ is a compact submanifold and $q$ a point. Applied to such $\p$, our result asserts that generically $q$ is not focal to $P$.

The proof is done in two steps. Firstly, we consider the trivially admissible case $\p\cap\Delta=\varnothing$ and apply the abstract genericity criterion using a local perturbation argument. Secondly, we treat the special case $\p\cap\Delta\neq\varnothing$ using its admissibility, since the abstract criterion fails due to the possible presence of strongly degenerate geodesics (see Definition \ref{defstdeg}). Furthermore, we stress that this result is \emph{not} an immediate consequence of the first case $\p\cap\Delta=\varnothing$ and the semi--Riemannian bumpy metric theorem. Indeed, if $\gamma\in\op(M)$ is a periodic $(g,\p)$--geodesic, the notions of degeneracy as a $(g,\p)$--geodesic and as a periodic geodesic \emph{do not} coincide (see Remark \ref{degeneracynotions}). For this, we use a more elaborate argument, which employs both the semi--Riemannian bumpy metric theorem and the abstract genericity criterion in a different way.

\begin{theorem}\label{bigone}
Let $M$ be a smooth $n$--dimensional manifold and $\nu\in\{0,\dots,n\}$ an index. Fix $\mathcal{E}\subset\sect^k_{\mbox{\tiny sym}}(TM^*\otimes TM^*)$ a $C^k$--Whitney type Banach space of tensor fields over $M$ and $\mathcal{A}_\nu\subset\mathcal{E}\cap\met_{\nu}^k(M)$ an open subset. Consider $\p$ an admissible $\nu$--general boundary condition. Then the following is a generic subset in $\mathcal{A}_{\nu,\p}$ $$\mathcal{G}_\p(M)=\Big\{g\in\mathcal{A}_{\nu,\p}: \mbox{ all } (g,\p)\mbox{--geodesics }\gamma\in\op(M) \mbox{ are nondegenerate}\Big\}.$$
\end{theorem}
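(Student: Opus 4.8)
The plan is to split the proof into two cases according to whether $\p\cap\Delta=\varnothing$ or $\p\cap\Delta\neq\varnothing$, as announced before the statement.

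\textbf{Case 1: $\p\cap\Delta=\varnothing$.} Here I would apply the abstract genericity criterion (Proposition \ref{abstractgenericity}) directly, with $X=\mathcal E$, $Y=\op(M)$, $\mathcal U=\mathcal A_{\nu,\p}\times\op(M)$, and $f$ the generalized energy functional \eqref{efunct}. Condition (i) is exactly Lemma \ref{fredholmness}. For condition (ii), fix a $(g_0,\p)$--geodesic $\gamma_0$ and a nontrivial $\p$--Jacobi field $J$ (an element of $\ker\frac{\partial^2 f}{\partial\gamma^2}(g_0,\gamma_0)$); I must produce $h\in\mathcal E$ with $\frac{\partial^2 f}{\partial\gamma\partial g}(g_0,\gamma_0)(J,h)\neq 0$, using formula \eqref{mixedderivative}. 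The standard White--type trick is: since $J$ is a nontrivial $\p$--Jacobi field it is not everywhere parallel to $\dot\gamma_0$ (Lemma \ref{jacobinotparallel}), and by Lemma \ref{parallelfinite} the set of parameters where $J\parallel\dot\gamma_0$ is finite; pick $t_0$ in the open interval where $J(t_0)$ and $\dot\gamma_0(t_0)$ are linearly independent. Here the hypothesis $\p\cap\Delta=\varnothing$ is what guarantees $\gamma_0$ is not a loop, so that — after possibly shrinking around $t_0$ to an interval on which $\gamma_0$ has no self-intersections (using Proposition \ref{selfintersections}, noting a minimal-period-less-than-$1$ periodic geodesic with endpoints off $\Delta$ cannot occur) — one may use Lemma \ref{extension} to construct a compactly supported symmetric tensor $h$ supported near $\gamma_0(t_0)$ with prescribed $h(\dot\gamma_0,\dot\gamma_0)$ and $\nabla h$ there, making the integrand in \eqref{mixedderivative} have a definite sign. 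Conclude genericity of $\mathcal G_\p(M)$ via Proposition \ref{abstractgenericity}.

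\textbf{Case 2: $\p\cap\Delta\neq\varnothing$.} The above fails because $\gamma_0$ may be a geodesic loop, and worse, a periodic geodesic with period $\tfrac1k$ whose self-intersections prevent the localization argument — these are exactly the strongly degenerate geodesics of Definition \ref{defstdeg}. The idea is: first restrict to the subset $\mathcal B\subset\mathcal A_{\nu,\p}$ of metrics without degenerate periodic geodesics, which is generic by the semi--Riemannian bumpy metric theorem of \cite{biljavapic2}. By Proposition \ref{stronglydegenerate}, a strongly degenerate $(g,\p)$--geodesic is degenerate as a periodic geodesic, so for $g\in\mathcal B$ there are no strongly degenerate $(g,\p)$--geodesics. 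Now I would like to re-run the White argument on the (open, dense) set $\mathcal B$: given $g_0\in\mathcal B$ and a nontrivial $\p$--Jacobi field $J$ along a $(g_0,\p)$--geodesic $\gamma_0$, Lemma \ref{jacobinotparallel} and Lemma \ref{parallelfinite} again give a parameter $t_0$ with $J(t_0),\dot\gamma_0(t_0)$ independent. The obstruction is that $\gamma_0(t_0)$ may be a self-intersection point of $\gamma_0$; but if $\gamma_0$ self-intersects infinitely often, Proposition \ref{selfintersections} forces $\gamma_0$ periodic with period $\omega<1$, and one argues (via Remark \ref{tagentisstdegenerate}) that the existence of the Jacobi field $J$ then forces $\gamma_0$ to be strongly degenerate — contradicting $g_0\in\mathcal B$. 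Hence $\gamma_0$ has only finitely many self-intersections, and I can shrink to an interval around $t_0$ meeting the image of $\gamma_0$ only along that arc, apply Lemma \ref{extension}, and verify condition (ii) as before. A careful version of Proposition \ref{abstractgenericity} applied on $\mathcal B$ (or a direct Sard--Smale argument on $\Pi^{-1}(\mathcal B)$) then yields that metrics in $\mathcal B$ without degenerate $(g,\p)$--geodesics are generic in $\mathcal B$, hence in $\mathcal A_{\nu,\p}$.

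\textbf{Where admissibility enters and the main obstacle.} Admissibility of $\p$ (Definition \ref{def:admgbc}) is used to control limits of $(g_\alpha,\p)$--geodesics: it provides a uniform lower bound $L_\mathrm R(\gamma)\geq a$ on nonconstant $(g,\p)$--geodesics near $g_0$, which is needed for the properness/closedness statements underlying the Sard--Smale application (ensuring $\mathfrak M\to\mathcal A_{\nu,\p}$ has the structure required, and that one cannot have sequences of degenerate geodesics collapsing to a point). The main obstacle I anticipate is precisely the bookkeeping in Case 2: disentangling the two notions of degeneracy (Remark \ref{degeneracynotions}) and showing that, once the bumpy metric theorem removes periodic-degenerate metrics, the only remaining bad $(g,\p)$--geodesics are those amenable to the localization argument — i.e.\ that the ``infinitely self-intersecting'' $\p$--Jacobi configurations are exactly the strongly degenerate ones already excluded. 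Getting this dichotomy clean, and checking that the abstract criterion may legitimately be applied with the restricted parameter set $\mathcal B$ in place of all of $\mathcal A_{\nu,\p}$, is the technical heart of the argument.
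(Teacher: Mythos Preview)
Your overall two--case split matches the paper, but there are genuine gaps in both cases.

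In \textbf{Case 1}, your claim that ``a minimal-period-less-than-$1$ periodic geodesic with endpoints off $\Delta$ cannot occur'' is false: if $\gamma_0$ is periodic with period $\omega<1$ not dividing $1$, then $\gamma_0(0)\neq\gamma_0(1)$ yet $\gamma_0$ has infinitely many self-intersections. The paper handles this residual case by invoking the parity argument from the second half of \cite[Proposition~4.3]{biljavapic}, which you omit. In \textbf{Case 2}, your dichotomy ``either finitely many self-intersections, or strongly degenerate'' is also wrong. A $(g_0,\p)$--geodesic with $g_0$ bumpy can be periodic of period $\tfrac{1}{k}$ without being strongly degenerate: strong degeneracy requires the specific sum condition $\sum_i J(t+\tfrac{i}{k})$ parallel to $\dot\gamma_0$, not merely the existence of a $\p$--Jacobi field. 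The paper's actual argument (Remark~\ref{whenitfails}) is that the local perturbation \emph{still works} for periodic $\gamma_0$ whenever this sum is not a multiple of $\dot\gamma_0$ at some $t_0$; only when the sum condition holds everywhere is $\gamma_0$ strongly degenerate, and then Proposition~\ref{stronglydegenerate} yields the contradiction with bumpiness. You do not get to conclude that $\gamma_0$ has finitely many self-intersections.

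More seriously, you cannot apply Proposition~\ref{abstractgenericity} with parameter space $\mathcal B$, since $\mathcal B$ is only a dense $G_\delta$, not an open subset of a Banach manifold; your parenthetical ``careful version'' hides exactly this difficulty. The paper resolves it by writing $\mathcal G_\p(M)=\bigcap_\alpha\mathcal R_\alpha$, where $\mathcal R_\alpha$ consists of metrics whose $(g,\p)$--geodesics of $g_{\mathrm R}$--length $\le\alpha$ are nondegenerate, and proving each $\mathcal R_\alpha$ open and dense. Openness is precisely where admissibility enters: given $g_\beta\to g_\infty$ in the complement with degenerate $(g_\beta,\p)$--geodesics $\gamma_\beta$, the uniform lower bound $L_{\mathrm R}(\gamma_\beta)\ge a$ prevents the limit $\gamma_\infty$ from collapsing to a constant curve, so it is a genuine degenerate $(g_\infty,\p)$--geodesic. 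For density, the paper uses that each $\mathcal B_\alpha$ (bumpy up to length $\alpha$) is \emph{open} and dense by \cite{biljavapic2}, and then applies Proposition~\ref{abstractgenericity} on the legitimate open set $\mathcal U=\mathcal B_\alpha\times\{\gamma:L_{\mathrm R}(\gamma)<\alpha\}$. Without this length-truncated decomposition, neither the openness argument nor the application of the abstract criterion goes through.
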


\begin{proof}
We shall prove the genericity of $\mathcal{G}_\p(M)$ in $\mathcal{A}_{\nu,\p}$ in two steps. Firstly, we suppose that $\p$ satisfies $\p\cap\Delta=\varnothing$, and apply the abstract genericity criterion in Proposition \ref{abstractgenericity}. Secondly, if $\p\cap\Delta\neq\varnothing$, the abstract criterion fails due to possible existence of strongly degenerate geodesics. In this case, we use a more elaborate argument to deal with the periodic geodesic issue, appealing to the semi--Riemannian bumpy metric theorem.

\smallskip
{\bf Case 1.} Suppose $\p\cap\Delta=\varnothing$. From Lemma \ref{fredholmness}, it suffices to prove that condition (ii) of Proposition \ref{abstractgenericity} is satisfied in the geodesic set up described in the beginning of this section. More precisely, we have to prove that given a semi--Riemannian metric $g_0\in\mathcal{A}_{\nu,\p}$ and $\gamma_0$ a $(g_0,\p)$--geodesic with a nontrivial $\p$--Jacobi field $J$ along $\gamma_0$ (see \eqref{pcampodijacobi}), there exists $h\in\mathcal{E}$ such that the right--hand side of \eqref{mixedderivative} does not vanish. For this, we use a \emph{local perturbation argument} along the lines of \cite[Proposition 4.3]{biljavapic}, that employs Lemma \ref{extension}.

Once more we treat two cases separately. First we assume $\gamma_0$ is not a portion of a periodic geodesic with period $\omega <1$. With this assumption, from Proposition \ref{selfintersections}, $\gamma_0$ has only a finite number of self intersections. It is then possible to find an open interval $I\subset [0,1]$ such that $\gamma_0|_I$ is injective, $\gamma_0(I)\cap\gamma_0([0,1]\setminus I)=\varnothing$ and $J$ is not parallel to $\dot{\gamma_0}$ at any time in $I$. Indeed such an interval exists, since the first condition is feasible due to the finiteness of self intersections and the second is also admissible as a consequence of Lemmas \ref{parallelfinite} and \ref{jacobinotparallel}.

In order to find the required $h\in\mathcal{E}$, we now apply Lemma \ref{extension} with $E=TM^*\otimes TM^*$. Let $U\subset M$ be any open subset containing $\gamma_0(I)$ such that $\gamma_0(t)\in U$ if and only if $t\in I$. For instance, $U$ can be taken as the complement of $\gamma_0([0,1]\setminus I)$. Set $H\in\sect(\gamma_0^*E)$ identically null and $K\in\sect(\gamma_0^*E)$ any symmetric bilinear form smooth on $t$, that satisfies $K(\dot{\gamma_0},\dot{\gamma_0})\geq 0$ and $\int_I K_t(\dot{\gamma_0}(t),\dot{\gamma_0}(t))\;\mathrm{d}t>0$. Reducing the size of $I$ if necessary and applying Lemma \ref{extension}, it follows that there exists a globally defined smooth section $h$ of $E$ with compact support contained in $U$ such that $h_{\gamma_0(t)}=0$ and $\nabla_{J_t} h=K_t$ for all $t\in I$. Hence for this $h$, the formula \eqref{mixedderivative} gives$$\int_0^1\left[h(\dot{\gamma_0},\D J)+\tfrac{1}{2}\nabla h(J,\dot{\gamma_0},\dot{\gamma_0})\right]\;\mathrm{d}t=\tfrac{1}{2}\int_I K_t(\dot{\gamma_0}(t),\dot{\gamma_0}(t))\;\mathrm{d}t>0.$$

This concludes the proof of the case $\p\cap\Delta=\varnothing$, when $\gamma_0$ is not a portion of a periodic geodesic of period $\omega <1$.

If the $(g_0,\p)$--geodesic $\gamma_0$ has infinitely many self intersections, one can apply the exact same argument used in the second part of the proof of \cite[Proposition 4.3]{biljavapic} to show that this local perturbation approach above can be adapted. In general terms, the technique consists of a parity argument to find the desired interval $I$ where the local perturbation occurs.

\smallskip
\begin{remark}\label{whenitfails}
More generally, this local perturbation argument used in the previous case of geodesics with finite self intersections can be extended to \emph{any}\footnote{Except the case of a \emph{prime} periodic geodesic $\gamma_0\in\op(M)$, with period $\omega=1$. Recall that a geodesic is said to be prime if it is not obtained as n-fold iteration of some other geodesic.} periodic geodesic that is not strongly degenerate. In fact, if $\gamma_0$ has period $\omega <1$, then $\gamma_0(0)\ne\gamma_0(1)$, and \cite[Proposition 4.3]{biljavapic} applies. If $\gamma_0(0)=\gamma_0(1)$, then $\gamma_0$ is periodic with period $\tfrac{1}{k}$, $k\ge 2$. Suppose $J$ is a nontrivial $\p$--Jacobi field along $\gamma_0$. Then a sufficient condition for the local perturbation to hold is that for some $t_0\in [0,1]$, $$\sum\limits_{i=0}^{k-1} J\left(t_0+\tfrac{i}{k}\right)\ne 0,$$ which holds unless $\gamma_0$ is strongly degenerate. Under this condition, by continuity, it is possible to find an interval $I$ around such $t_0$ with the same properties as the interval $I$ considered above in Case 1. Then, Lemma \ref{extension} guarantees (see Remark \ref{tagentisstdegenerate}) existence of the desired globally defined smooth section $h$, verifying the transversality condition (ii) of Proposition \ref{abstractgenericity}.
\end{remark}

\smallskip
{\bf Case 2.} Assume now $\p\cap\Delta\neq\varnothing$. Recall that $L_\mathrm R$ is the length of curves with respect to the fixed complete Riemannian metric $g_\mathrm R$ on $M$. For each $\alpha\in\N$ define
\begin{equation}\label{eq:Ralpha}
\mathcal{R}_\alpha=\left\{g\in\mathcal{A}_{\nu,\p}:\begin{array}{c}\mbox{ all } (g,\p)\mbox{--geodesics }\gamma\mbox{ with } \\ L_\mathrm R (\gamma)\leq \alpha\mbox{ are nondegenerate} \end{array}\right\}.
\end{equation}

Since $\mathcal{G}_\p(M)=\bigcap_{\alpha\in\N} \mathcal{R}_\alpha$, we shall prove that each $\mathcal R_\alpha$ is open and dense in $\mathcal A_{\nu,\p}$, hence the genericity result will follow from the Baire theorem.

Let us verify the first claim, namely that $\mathcal R_\alpha$ are open. For this, consider a convergent sequence $\{g_\beta\}$ in $\mathcal A_{\nu,\p}\setminus\mathcal R_\alpha$, with $\lim g_\beta=g_\infty$. From definition of $\mathcal R_\alpha$, for each $\beta\in\N$ there exists a degenerate $(g_\beta,\p)$--geodesic $\gamma_\beta$ with $L_\mathrm R(\gamma_\beta)\leq\alpha$. Since $\p$ is compact and $L_\mathrm R(\gamma_\beta)\leq\alpha$, by the Arzel\`a--Ascoli theorem, up to subsequences there exists a convergent sequence $\{t_\beta\}$ in $[0,1]$ with $\lim t_\beta=t_\infty$ such that $\left\|\dot{\gamma_\beta}(t_\beta)\right\|_\mathrm R\leq\alpha$ for all $\beta\in\N$, and $\gamma_\beta(t_\beta)$ converges to $v\in T_{p_\infty}M$, with $p_\infty=\lim \gamma_\beta(t_\infty)$. From continuous dependence of ODE's solutions on initial conditions, it is easy to see that the solution $\gamma_\infty$ of $\D^{g_\infty}\dot\gamma=0$ with initial conditions $\gamma(t_\infty)=p_\infty$ and $\dot\gamma(t_\infty)=v$ is the $C^2$--limit of the sequence of geodesics $\gamma_\beta$. Therefore $\gamma_\infty$ is a $(g_\infty,\p)$--geodesic, and obviously $L_\mathrm R(\gamma_\infty)\leq\alpha$.

Moreover, $\gamma_\infty$ is \emph{nonconstant}. This follows from the fact that $\p$ is an admissible general boundary condition. Hence there exists $a>0$ such that $L_\mathrm R(\gamma_\beta)\geq a$, for large $\beta$, since $g_\beta$ will be in any open neighborhoods of $g_\infty$ in $\mathcal A_{\nu,\p}$.

In order to prove that such $\gamma_\infty$ is a \emph{degenerate} $(g_\infty,\p)$--geodesic, for each $\beta$ let $J_\beta$ be a nontrivial $\p$--Jacobi field along $\gamma_\beta$. Then $J_\beta$ is the solution of a second order linear ODE whose initial conditions converge to initial conditions of the $\p$--Jacobi fields equation along the $g_\infty$--geodesic $\gamma_\infty$. More precisely, for each $\beta$, $J_\beta$ is a nontrivial $\p$--Jacobi that in particular satisfies the $g_\beta$--Jacobi equation $$\D^{g_\beta}J_\beta=R^{g_\beta}J_\beta.$$ By adding a suitable multiple of ${\dot\gamma_\beta}(0)$, one can assume that $J_\beta(0)$ is $g_\mathrm R$--orthogonal to $\dot{\gamma_\beta}(0)$. In addition, using an adequate normalization it is also possible to assume that $\max\{\|J_\beta(0)\|_{\mathrm R},\|\D^{g_\beta} J_\beta(0)\|_{\mathrm R}\}=1.$ Again, up to subsequences, the initial conditions converge, $$\lim J_\beta(0)=v\in T_{\gamma_\infty(0)}M, \; \; \lim \D^{g_\beta} J_\beta(0)=w\in T_{\gamma_\infty(0)}M.$$ By continuity, $v$ is $g_\mathrm R$--orthogonal to $\dot{\gamma_\infty}(0)$, and \begin{equation}\label{maxinfty} \max\{\|v\|_{\mathrm R},\|w\|_{\mathrm R}\}=1. \end{equation} The solution of the $g_\infty$--Jacobi equation along $\gamma_\infty$ with such limit initial conditions is a $\p$--Jacobi field $J_\infty$ that is also the $C^2$--limit of the $\p$--Jacobi fields $J_\beta$. Finally, it is not a trivial $\p$--Jacobi field. Indeed, if $J_\infty$ were a multiple of $\dot{\gamma_\infty}$, since $v$ is $g_\mathrm R$--orthogonal to $\dot{\gamma_\infty}(0)$, it would be $v=0$ and $w=0$, which contradicts \eqref{maxinfty}. Hence $g_\infty\in\mathcal A_{\nu,\p}\setminus\mathcal R_\alpha$, which proves that $\mathcal R_\alpha$ is an open subset.

It still remains to prove the second claim, that $\mathcal R_\alpha$ are dense. For this we define the following subsets of $\mathcal{A}_{\nu,\p}$, $$ \mathcal{B}_\alpha=\left\{g\in\mathcal{A}_{\nu,\p}:\begin{array}{c}\mbox{ all periodic } g\mbox{--geodesics }\gamma\mbox{ with }  \\ L_\mathrm R (\gamma)\leq \alpha\mbox{ are nondegenerate} \end{array}\right\},$$ \smallskip $$ \mathcal{D}_\alpha=\left\{g\in\mathcal{A}_{\nu,\p}: \begin{array}{c} \mbox{ all } g\mbox{--geodesics }\gamma\mbox{ with }L_\mathrm R (\gamma)<\alpha \mbox{ that are} \\ \mbox{periodic or }(g,\p)\mbox{--geodesics are nondegenerate} \end{array} \right\}.$$

It is easy to see that for each $\alpha$, $\mathcal{D}_{\alpha+1}\subset\mathcal{R}_\alpha$. From the semi--Riemannian bumpy metric theorem \cite[Theorem 3.14]{biljavapic2}, each $\mathcal B_\alpha$ is open and dense in $\mathcal A_{\nu,\p}$. Hence to prove that $\mathcal R_\alpha$ is dense in $\mathcal A_{\nu,\p}$, it suffices to prove that $\mathcal D_\alpha$ is dense in $\mathcal B_\alpha$. To this aim, for each $\alpha$ we use the abstract genericity criterion of Proposition \ref{abstractgenericity} again. The setting is as in Case 1, with the only difference being the domain of the generalized energy functional, which we now take as the open subset $$\mathcal U=\mathcal B_\alpha\times\{\gamma\in\op(M):L_\mathrm R(\gamma)<\alpha\}.$$ This means that we are dealing only with \emph{bumpy} metrics, that is, metrics without degenerate periodic geodesics.

Let us prove that the hypothesis of Proposition \ref{abstractgenericity} are verified also in this context, concluding the proof. Condition (i) follows again from Lemma \ref{fredholmness}. As for \emph{transversality} condition (ii), it would only fail in the presence of strongly degenerate geodesics (see Remark \ref{whenitfails}). Indeed, for all non strongly degenerate geodesics $\gamma_0$ and $\p$--Jacobi fields $J$ along $\gamma_0$, the sufficient condition mentioned in Remark \ref{whenitfails} is verified, hence Lemma \ref{extension} can be applied to some interval $I$ with the same properties as in Case 1. This is true even if $\gamma_0$ is a prime periodic geodesic, since it cannot be degenerate once the considered domain is a set of bumpy metrics $\mathcal B_\alpha$.

However, if $\gamma_0$ is a strongly degenerate $(g_0,\p)$--geodesic, then it admits a Jacobi field $J$ which satisfies (b) of Definition \ref{defstdeg}. For this $J$, the right--hand side of \eqref{mixedderivative} is identically null for any section $h$ of $TM^*\otimes TM^*$. In this strongly degenerate case, condition (ii) would not hold. Nevertheless, there cannot be critical points of the form $(g_0,\gamma_0)$, where $\gamma_0$ is a strongly degenerate $(g_0,\p)$--geodesic. This follows from Proposition \ref{stronglydegenerate}, since $\gamma_0$ would also be a degenerate periodic geodesic, contradicting $g_0\in\mathcal B_\alpha$. Thus condition (ii) is verified and the abstract genericity criterion applies. Therefore $\mathcal R_\alpha$ is generic, in particular dense, in $\mathcal A_{\nu,\p}$ for each $\alpha$.

This concludes the proof that $\mathcal G_\p(M)$ is generic in $\mathcal A_{\nu,\p}$.
\end{proof}

\subsection{\texorpdfstring{Genericity in the $C^\infty$--topology}{Genericity in the smooth case}}\label{sec:cinfty}

We conclude this section showing how to extend the notion of genericity given above to the space of metrics endowed with the weak $C^\infty$--topology. Since this topology cannot be induced by a Banach space structure on the set $\sect^k_{\mbox{\tiny sym}}(TM^*\otimes TM^*)$ of symmetric tensors on $M$, Proposition \ref{abstractgenericity} cannot be applied. The appropriate argument uses ideas from \cite{fhs} and although it is basically contained in previous works \cite{biljavapic,gj}, will be repeated here for reader's convenience.

Let us rename $\mathcal A_\nu,\, \mathcal A_{\nu,\p}$ and $\mathcal G_\p(M)$ as $\mathcal A^k_\nu,\,\mathcal A^k_{\nu,\mathcal P}$ and $\mathcal G^k_\p(M)$ to stress dependence on $C^k$ regularity of tensor fields. Let us set $\mathcal A^\infty_{\nu,\p}=\bigcap_{k\in\N}\mathcal A^k_{\nu,\p}$ and analogously $$\mathcal G^\infty_\p(M)=\bigcap_{k\in\N}\mathcal G^k_\p(M).$$ The main result stated above, Theorem \ref{bigone}, asserts that $\mathcal G^k_\p(M)$ is generic in $\mathcal A^k_{\nu,\p}$, for all $k$, and we claim it also holds in the $C^\infty$--topology.

\begin{proposition}
$\mathcal G^\infty_\p(M)$ is generic in $\mathcal A^\infty_{\nu,\p}$.
\end{proposition}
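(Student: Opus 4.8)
The plan is to run the argument of Theorem~\ref{bigone} at each finite order of differentiability and then pass to the limit by a Baire category argument in the Fréchet space of smooth tensor fields, following \cite{fhs}; the point is that Proposition~\ref{abstractgenericity} cannot be invoked directly, since the weak $C^\infty$--topology is not induced by any Banach structure on $\sect^k_{\mbox{\tiny sym}}(TM^*\otimes TM^*)$. For $\alpha\in\N$ let $\mathcal R^k_\alpha\subset\mathcal A^k_{\nu,\p}$ denote the set of $C^k$ metrics all of whose $(g,\p)$--geodesics $\gamma$ with $L_\mathrm R(\gamma)\le\alpha$ are nondegenerate (cf. \eqref{eq:Ralpha}), and set $\mathcal R^\infty_\alpha=\mathcal R^k_\alpha\cap\met_\nu^\infty(M)$, which is independent of $k\ge 2$ because its definition involves only the geodesics and Jacobi fields of $g$. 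Since every geodesic has finite Riemannian length,
\[
\mathcal G^\infty_\p(M)=\bigcap_{\alpha\in\N}\mathcal R^\infty_\alpha .
\]
Now $\met_\nu^\infty(M)$ with the weak $C^\infty$--topology is an open subset of the separable Fréchet space $\sect^\infty_{\mbox{\tiny sym}}(TM^*\otimes TM^*)$, hence a Baire space; by Proposition~\ref{nondegenerateopen} and the definition $\mathcal A^\infty_{\nu,\p}=\bigcap_k\mathcal A^k_{\nu,\p}$, the latter is a $G_\delta$ subset of it, so still a Baire space. It therefore suffices to prove that each $\mathcal R^\infty_\alpha$ is open and dense in $\mathcal A^\infty_{\nu,\p}$ for the weak $C^\infty$--topology.

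\emph{Openness.} One revisits the proof that $\mathcal R_\alpha$ is open given in Theorem~\ref{bigone}: the only convergence of metrics invoked there is $C^2$ on compact subsets of $M$, which is all that is needed for the continuous dependence — on the metric and its derivatives up to order two — of the geodesic equation, the Jacobi equation, the second fundamental form $\s^\p$, and the orthogonality conditions defining $(g,\p)$--geodesics and $\p$--Jacobi fields, together with admissibility of $\p$ (a metric--independent condition by Proposition~\ref{admissibility}). Hence $\mathcal R_\alpha$ is open in the weak Whitney $C^k$--topology on $\met_\nu^k(M)$ for every $k\ge2$, and since the weak $C^\infty$--topology is finer, $\mathcal R^\infty_\alpha$ is open in $\mathcal A^\infty_{\nu,\p}$. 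This step cannot be bypassed: because $M$ need not be compact, the inclusion of $\mathcal A^\infty_{\nu,\p}$ into the Banach space of $C^k$ tensors is not continuous, so openness in the weak $C^\infty$--topology is not automatically inherited from the Banach setting.

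\emph{Density.} Fix $g_0\in\mathcal A^\infty_{\nu,\p}$ and a basic weak $C^\infty$--neighbourhood $\mathcal N$ of $g_0$; we may assume $\mathcal N\supseteq\{g:\|(g-g_0)|_K\|_{C^m}<\varepsilon\}$ for some compact $K\subset M$, some integer $m\ge2$ and some $\varepsilon>0$. Since $g_0\in\mathcal A^m_{\nu,\p}$, applying Theorem~\ref{bigone} with $k=m$ (for a suitable $C^m$--Whitney type Banach space of tensor fields containing a neighbourhood of $g_0$) shows that $\mathcal R^m_\alpha$ is open and dense in $\mathcal A^m_{\nu,\p}$ for the weak Whitney $C^m$--topology. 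As smooth metrics are dense in $\mathcal A^m_{\nu,\p}$ for that topology (mollification on compact sets), the set $\mathcal R^m_\alpha\cap\met_\nu^\infty(M)$ — being the intersection of a weak $C^m$--open dense set with a weak $C^m$--dense set — is itself weak $C^m$--dense in $\mathcal A^m_{\nu,\p}$. In particular it meets the nonempty weak $C^m$--open set $\{g\in\mathcal A^m_{\nu,\p}:\|(g-g_0)|_K\|_{C^m}<\varepsilon\}\subseteq\mathcal N$, producing a smooth metric $g_1\in\mathcal R_\alpha\cap\mathcal N$; since $g_1\in\met_\nu^\infty(M)$, in fact $g_1\in\mathcal R^\infty_\alpha\cap\mathcal N$. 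Hence $\mathcal R^\infty_\alpha$ is dense in $\mathcal A^\infty_{\nu,\p}$, and the Baire theorem gives the claim.

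The main obstacle is precisely the failure of the abstract criterion in a non-Banach setting, so that no single application of Proposition~\ref{abstractgenericity} is available. The remedy rests on reconciling three ingredients: Baire category in the Fréchet space of smooth tensors, the $C^m$--density of smooth metrics among $C^m$ metrics, and — the subtle point when $M$ is noncompact — the observation that the openness half of Theorem~\ref{bigone} only uses $C^2$--convergence on compacta, which is what permits transferring openness of the sets $\mathcal R_\alpha$ to the weak $C^\infty$--topology.
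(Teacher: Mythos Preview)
Your proof is correct and follows essentially the same approach as the paper: both decompose $\mathcal G^\infty_\p(M)=\bigcap_\alpha\mathcal R^\infty_\alpha$, observe that the openness argument from Theorem~\ref{bigone} only uses $C^2$--convergence on compacta and hence carries over to the weak $C^\infty$--topology, and obtain density of $\mathcal R^\infty_\alpha$ from the $C^k$--open--density of $\mathcal R^k_\alpha$ together with the $C^k$--density of smooth metrics. Your write-up is more explicit than the paper's (in particular your remarks on the Baire property of $\mathcal A^\infty_{\nu,\p}$ and on why openness is not automatic from the Banach setting when $M$ is noncompact), but the argument is the same.
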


\begin{proof}
Rename as $\mathcal R^k_\alpha$ the set \eqref{eq:Ralpha} of metrics in $\mathcal A_{\nu,\p}^k$ defined in the proof of Theorem \ref{bigone}, such that all $(g,\p)$--geodesics with $g_\mathrm R$--length less or equal to $\alpha$ are nondegenerate. Let us also set $\mathcal R^\infty_\alpha=\bigcap_{k\in\N} \mathcal R^k_\alpha$. From the same argument used in the proof, it follows that $\mathcal R^k_\alpha$ is open in $\mathcal A^k_{\nu,\p}$ for $k=2,\ldots,+\infty$. Therefore, it is only left to prove that $\mathcal R^\infty_\alpha$ is dense in $\mathcal A^\infty_{\nu,\p}$, for all positive integers $\alpha$. This implies that $\mathcal G^\infty_\p(M)=\bigcap_{\alpha\in\N}\mathcal R^\infty_\alpha$ will be a countable intersection of open dense subsets of $\mathcal A^\infty_{\nu,\p}$, hence it is generic in $\mathcal A^\infty_{\nu,\p}$.

In order to verify that $\mathcal R^\infty_\alpha$ is dense in $\mathcal A^\infty_{\nu,\p}$, for each $\alpha$ we argue as follows. Note that $\mathcal R^k_\alpha$ contains $\mathcal G^k_\p(M)$, which is generic (Theorem \ref{bigone}) in $\mathcal A^k_{\nu,\p}$. Therefore $\mathcal R^k_\alpha$ is dense in $\mathcal A^k_{\nu,\p}$ (and open, as already mentioned). Moreover $\mathcal A^\infty_{\nu,\p}$ is dense in $\mathcal A^k_{\nu,\p}$ for each $k\in\N,\,k\ge 2$. Observe that $\mathcal R^\infty_\alpha=\mathcal A^\infty_{\nu,\p}\cap\mathcal R^k_\alpha$, and is therefore dense in $\mathcal A^k_{\nu,\p}$. In fact, it is the intersection of a dense subset with and open and dense subset of $\mathcal A^k_{\nu,\p}$. Thus $\mathcal R^\infty_\alpha$ is dense in the intersection $\cap_k A^k_{\nu,\p}=\mathcal A^\infty_{\nu,\p}$, and this concludes the proof.
\end{proof}

\end{document}